\definecolor{webgreen}{rgb}{0,.5,0}
\definecolor{webbrown}{rgb}{.6,0,0}
\newcommand{\mycaption}[2]{\caption[#1]{\, \textbf{#1.} #2}}
\newcommand{\seqnum}[1]{\href{https://oeis.org/#1}{\rm \underline{#1}}}
\begin{document}

	\theoremstyle{plain}
	\newtheorem{theorem}{Theorem}
	\newtheorem{corollary}[theorem]{Corollary}
	\newtheorem{lemma}[theorem]{Lemma}
	\newtheorem{proposition}[theorem]{Proposition}
	\theoremstyle{definition}
	\newtheorem{definition}[theorem]{Definition}
	\newtheorem{example}[theorem]{Example}
	\newtheorem{conjecture}[theorem]{Conjecture}
	\newtheorem{notation}[theorem]{Notation}
	\theoremstyle{remark}
	\newtheorem{remark}[theorem]{Remark}
	
	\newcommand{\floor}[1]{\left\lfloor#1\right\rfloor}
	\newcommand{\ceil}[1]{\left\lceil#1\right\rceil}

	\begin{center}
		\vskip 1cm{\LARGE\bf Takagi Function Identities\\
		\vskip .1in
		on Dyadic Rationals
		}
		\vskip 1cm
		\large
		Laura Monroe\footnote{This work was supported in part by the United States Department of Energy through the Los Alamos National Laboratory, operated by Triad National Security, LLC, for the National Nuclear Security Administration of the U.S. DOE under Contract No.\ 89233218CNA000001, and by the U.S. DOE ASC program at Los Alamos National Laboratory. }\\
		Ultrascale Systems Research Center\\
		Los Alamos National Laboratory\\
		Los Alamos, NM 87501\\
		USA\\
		\href{mailto:lmonroe@lanl.gov}{\tt lmonroe@lanl.gov} \\
	\end{center}
	
	\vskip .2 in
	\begin{abstract}
		The number of unbalanced interior nodes of divide-and-conquer trees on $n$ leaves is known to form a sequence of dilations of the Takagi function on dyadic rationals. We use this fact to derive identities on the Takagi function and on the Hamming weight of an integer in terms of the Takagi function.
	\end{abstract}
	
	\allowdisplaybreaks

\section{Introduction}
The Takagi function is a widely-studied self-similar continuous nowhere-differentiable function on $[0,1]$, identified by Takagi in 1901 \cite{takagi01}. It has connections to many areas of mathematics, including number theory, combinatorics, probability theory, and analysis, and many people have studied this function over the past century. Two recent comprehensive surveys on the subject have been published: one by  Lagarias \cite{lagarias11}, and one by Allaart and Kawamura \cite{allaart11}. The On-Line Encyclopedia of Integer Sequences (OEIS) sequence \seqnum{A268289} \cite{OEIS} is also related to the Takagi function. There are a number of identities linking this sequence with the Takagi function \cite{lagarias11,allaart11,baruchel19_2}.

There is an interesting connection between binary trees representing divide-and-conquer calculations and the Takagi function, shown by Coronado 
et al.\ \cite{coronado20}: the number of unbalanced interior nodes of a divide-and-conquer tree is a sequence of dilations of the Takagi function on dyadic rationals. 

In this paper we derive several formulas for counting such nodes on a divide-and-conquer tree, and use the dilations to extend them to identities on the Takagi function on dyadic rationals. These identities are in Section~\ref{dnodes2takagi}, and include the following:
\begin{itemize}
	\item Recursive and closed formulas for the Takagi function on dyadic rationals. The recursive formulas are in Theorems \ref{th_yet_another_takagi_prop} and \ref{takagi_another_recurrence}, and the closed are in Theorems  \ref{cor_takagi_exp_D_closed} and \ref{th_another_explicit_takagi}.
	\item A formula for the Hamming weight of an integer in terms of the Takagi function on dyadic rationals, in Theorem \ref{th_hamming_takagi}, derived from the formulas for counting $D$-nodes. As a consequence, we provide another proof of Trollope's theorem on cumulative binary digit sums \cite{trollope68} in Corollary~\ref{cor_trollope}, and give three succinct forms of the cumulative binary digit sum in Corollary~\ref{cor_trollope3way}.
\end{itemize}

\section{Notation}
We assume throughout the paper that $n>0$ is an integer, and use the following representations. If additional assumptions are made in individual theorems on these variables, we state them explicitly; otherwise, the definitions in use are as stated in Notation~\ref{not_n}.
\begin{notation}\label{not_n}
	A positive integer $n$ may be represented as:
	\begin{itemize}
		\item $n=2^k+r$, with integers $k$ and $r$ such that $k\ge 0$ and $0\le r<2^k$.
		\item $n=2^k\cdot (1+x)$, with an integer $k\ge 0$ and a dyadic rational $x$ with $0 \le x < 1$ .
		\item  $n = \sum_{i=0}^k n_i \cdot 2^i$, where $k=\floor{\log_2(n)}$ (binary decomposition). This may also be represented as $n=n_k n_{k-1} \cdots n_0$.
	\end{itemize}
\end{notation}
\begin{lemma}\label{lem_conversion}
	Let $n, k, r, x$ be as in Notation~\ref{not_n}. Then
	$x=\frac{r}{2^k}$ and $1+x = \frac{n}{2^k}.$
\end{lemma}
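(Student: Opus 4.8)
The plan is to prove the statement by directly comparing the two representations of $n$ given in Notation~\ref{not_n}. This is essentially a definition-unpacking exercise, so I expect no serious obstacle; the only care needed is to confirm that the parameter $k$ is the \emph{same} in both representations, so that the algebraic manipulation is legitimate.

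First I would observe that representation (i) writes $n = 2^k + r$ with $0 \le r < 2^k$, while representation (ii) writes $n = 2^k(1+x)$ with $0 \le x < 1$. Since in both cases $k = \lfloor \log_2 n \rfloor$ is determined as the unique integer with $2^k \le n < 2^{k+1}$, the two $k$'s coincide, and I may treat $k$ as fixed. Then I would simply set the two expressions equal: $2^k + r = 2^k(1+x) = 2^k + 2^k x$, which upon cancelling $2^k$ yields $r = 2^k x$, i.e. $x = r/2^k$. This establishes the first identity.

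For the second identity, I would start from $n = 2^k(1+x)$ and divide both sides by $2^k$ to obtain $1 + x = n/2^k$ directly. Alternatively, one can substitute $x = r/2^k$ into $1+x$ to get $1 + r/2^k = (2^k + r)/2^k = n/2^k$, using representation (i). Either route closes the argument.

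The only point worth flagging is the consistency of the range constraints: the condition $0 \le r < 2^k$ translates under $x = r/2^k$ into $0 \le x < 1$, exactly matching the constraint imposed on $x$ in representation (ii), which confirms that the two parametrizations describe the same integers and that the stated relations respect the admissible ranges. With that verification, the proof is complete.
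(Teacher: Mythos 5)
Your proof is correct: the paper actually states Lemma~\ref{lem_conversion} without any proof, treating it as immediate from Notation~\ref{not_n}, and your direct algebraic comparison of the two representations is exactly the argument left implicit. Your observation that the range constraints force the same $k = \lfloor \log_2 n \rfloor$ in both representations (i) and (ii) is the one genuinely necessary point, and you handle it correctly.
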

\section{The Takagi function and divide-and-conquer trees}
The Takagi function, shown in Figure~\ref{takagidilation}(\subref{takagifig}), is a continuous nowhere-differentiable function on $[0,1]$. 
We recast the original definition of the Takagi function on dyadic rationals from \cite{takagi01} to be consistent with notation and numbering convenient to the study of binary trees.
\begin{definition}[\textbf{Takagi function}] \label{takagi_newdef} \cite{takagi01}
	Let $n$ be an integer. The \emph{Takagi function} on dyadic rationals is defined as
	$$
	\tau\left(\frac{n}{2^{k+1}}\right) 
	= \sum_{i=1}^\infty\frac{\ell_i(n)}{2^i} 
	\text{, where } 
	\ell_{i+1}(n)=
	\begin{cases}
		\sum\limits_{j=0}^{i-1} n_{k-j}, & \text{ if } n_{k-i} = 0;\\[1.25em]
		i-\sum\limits_{j=0}^{i-1} n_{k-j}, & \text{ if } n_{k-i} = 1,\\
	\end{cases}
	$$
	where $n_i=0$ for every negative integer $i$.
\end{definition}
Theorem \ref{takagi2} gives a simple formula for the dilations of the Takagi function on dyadic rationals in [0,1], in terms of certain interior nodes of full binary trees, called $D$-nodes. 
\begin{definition}[\textbf{$S$-nodes and $D$-nodes}] \cite{monroe21}
	An interior node of a rooted full binary tree is called an \emph{$S$-node} if its two subtrees have the \textbf{S}ame number of leaves, and a \emph{$D$-node} if its subtrees have \textbf{D}ifferent numbers of leaves. Such a labeling gives an \emph{$SD$-tree}. 
\end{definition}
The $SD$-tree was defined to express the notion of balance in a binary commutative non-associative product \cite{monroe21}.

The next lemma is true because an $n$-leaf binary tree has $n-1$ interior nodes \cite{knuth97art1}, and these are partitioned into $S$-and $D$-nodes.
\begin{lemma} 
	\label{dnodes}
	Let $a(n)$ be the number of $S$-nodes in a binary tree having $n$ leaves. Then the number of $D$-nodes is $n-1-a(n)$. 
\end{lemma}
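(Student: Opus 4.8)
The plan is to exploit the fact that, in a rooted full binary tree, every interior node carries a well-defined classification as either an $S$-node or a $D$-node, and that these two classes partition the set of all interior nodes. First I would recall the standard structural count: a full binary tree with $n$ leaves has exactly $n-1$ interior nodes, which is the cited fact from \cite{knuth97art1}. If one wanted a self-contained argument, this follows by induction on $n$, since any such tree with $n \ge 2$ leaves splits at its root into a left subtree with $j$ leaves and a right subtree with $n-j$ leaves, contributing $1 + (j-1) + ((n-j)-1) = n-1$ interior nodes in total.

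Next I would verify that the $S$/$D$ labeling is both exhaustive and mutually exclusive. Each interior node, by definition of a full binary tree, has exactly two subtrees, each of which contains at least one leaf; hence the two leaf counts are comparable integers, and the node is an $S$-node precisely when they are equal and a $D$-node precisely when they differ. There is no third possibility and no node can be of both types, so the set of interior nodes is the disjoint union of the $S$-nodes and the $D$-nodes.

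Combining these two observations gives the conclusion directly: writing $a(n)$ for the number of $S$-nodes and letting $d(n)$ denote the number of $D$-nodes, the disjoint-partition statement yields $a(n) + d(n) = (n-1)$, whence $d(n) = (n-1) - a(n)$.

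Honestly, there is no genuine obstacle here — the statement is a bookkeeping identity, and the single ingredient requiring external justification, the $n-1$ interior-node count, is already supplied by the citation. The only point demanding any care is the definitional one: confirming that the $S$/$D$ dichotomy truly covers every interior node exactly once, which is immediate from the fullness of the tree.
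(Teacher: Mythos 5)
Your proof is correct and follows exactly the paper's own reasoning: the tree has $n-1$ interior nodes (cited from Knuth), and these are partitioned into $S$-nodes and $D$-nodes, giving the count by subtraction. The extra detail you supply (the induction for the $n-1$ count and the check that the $S$/$D$ dichotomy is exhaustive and exclusive) is fine but not a different approach.
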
 
\begin{definition}[\textbf{Divide-and-conquer tree}]
	A \emph{divide-and-conquer tree} is  a full binary tree in which the number of leaf descendants of the left and right children of any node differ by at most $1$.
\end{definition} 

The binary tree generated is thus as balanced as it can be. At every level, the leaf descendants of an interior node are evenly or almost evenly divided between the left and the right branches. 
There is only one divide-and-conquer tree on $n$ leaves, up to tree isomorphism.
\begin{notation}
	The number of $D$-nodes in a divide-and-conquer tree is denoted by $\delta(n)$.
\end{notation}
Theorem \ref{takagi2} is the same as Corollary 4 of Coronado 
et al.\ \cite{coronado20}
but is stated here in terms of $D$-nodes and Takagi's original definition. It is also closely related to 
Baruchel's proposition 3.3 \cite{baruchel19_2}. We show several examples of these dilations in Figure \ref{takagidilation}.

\begin{figure}[htb]
	\centering
	\begin{subfigure}{0.2\textwidth} 
		\includegraphics[width=1\textwidth]{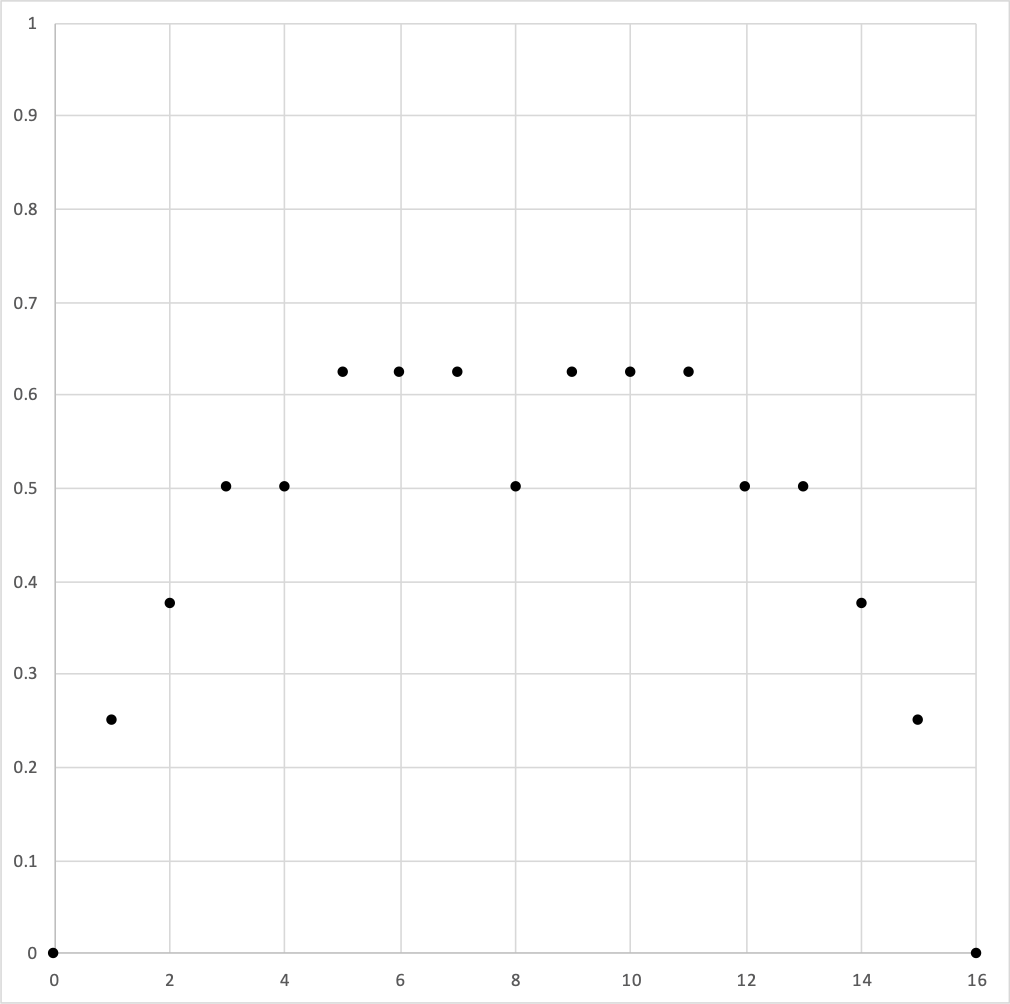}
		\caption{$y=\frac{\delta(16+x)}{16}$}
		\label{delta16}
	\end{subfigure}
	\hspace{1em}
	\begin{subfigure}{0.2\textwidth}
		\includegraphics[width=1\textwidth]{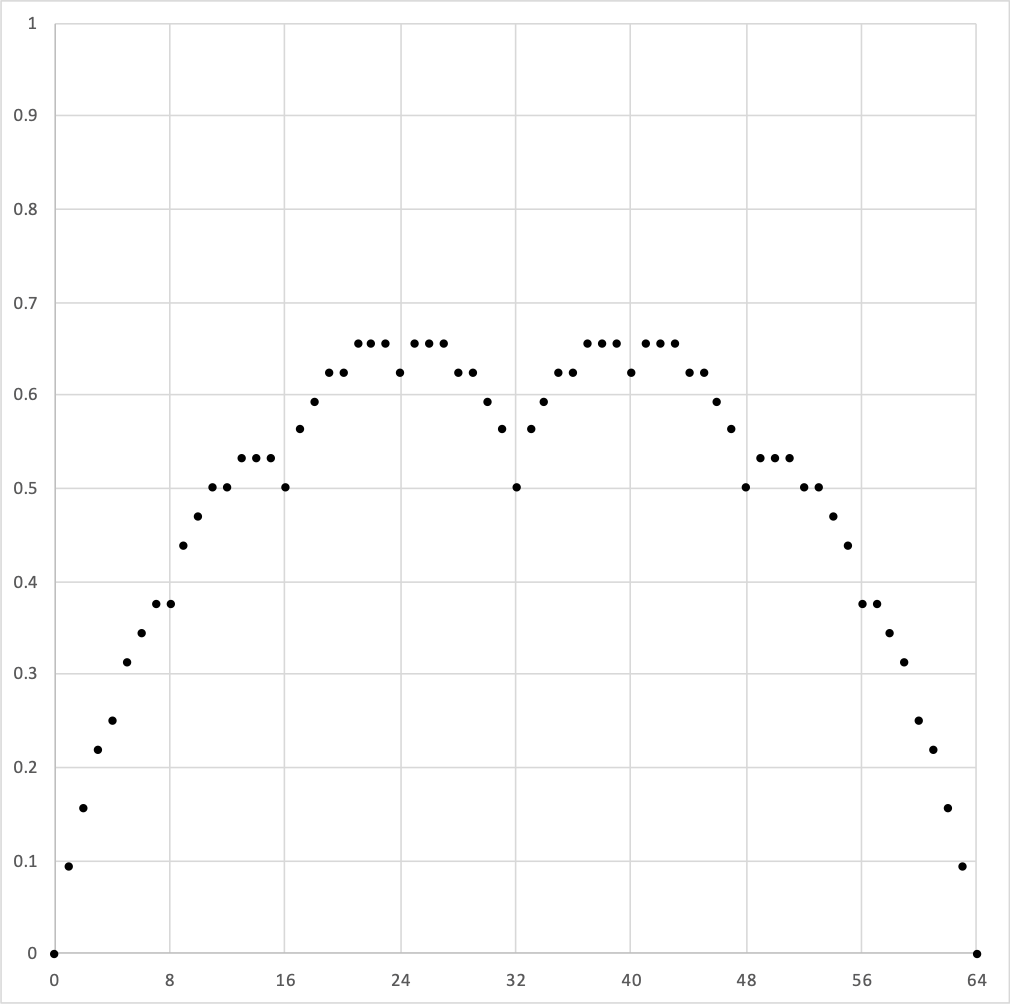}
		\caption{$y=\frac{\delta(64+x)}{64}$}
		\label{delta64}
	\end{subfigure}
	\hspace{1em}
	\begin{subfigure}{0.2\textwidth}
		\includegraphics[width=1\textwidth]{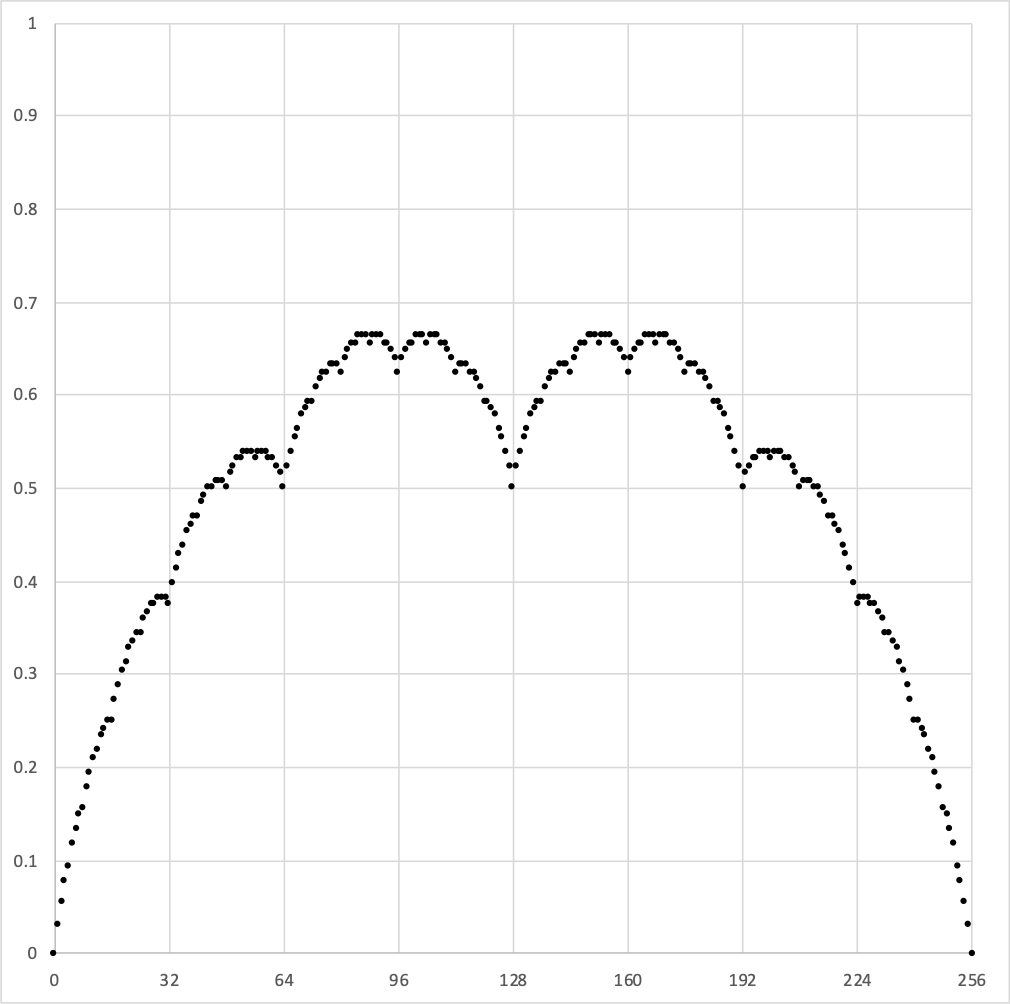}
		\caption{$y=\frac{\delta(256+x)}{256}$}
		\label{delta256}
	\end{subfigure}
	\hspace{1em}
	\begin{subfigure}{0.2\textwidth}
		\includegraphics[width=1\textwidth,height=1\textwidth]{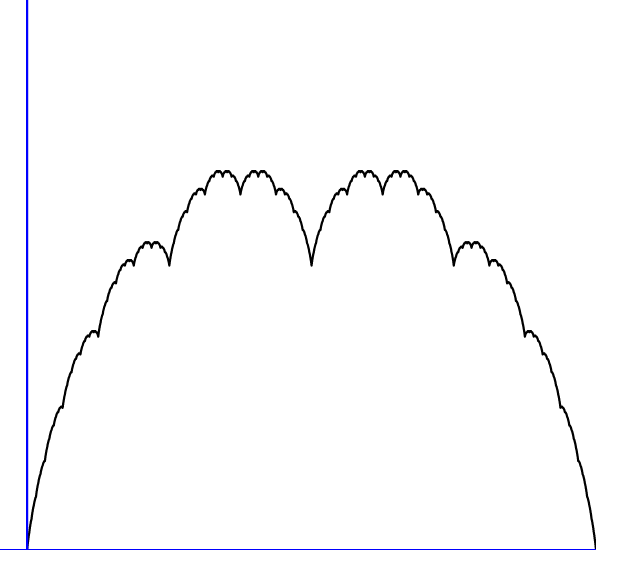}
		\caption{Takagi curve.}
		\label{takagifig}
	\end{subfigure}
	\mycaption{Divide-and-conquer dilations of the Takagi function on the dyadic rationals}
	{Subfigures (\subref{delta16}), (\subref{delta64}) and (\subref{delta256}) show examples of the dilations $y=\frac{\delta(2^k+x)}{2^k}=\tau\left(\frac{x}{2^k}\right)$  from Theorem \ref{takagi2}, where $\delta(n)$ is the number of $D$-nodes on a divide-and-conquer tree on $n$ leaves. Here, $k=4,6,\text{ and }8$, and $x$ is an integer with $0 \le x \le 2^k$. These may be visually compared to Subfigure (\subref{takagifig}), showing
		the continuous, self-similar, nowhere-differentiable Takagi (blancmange) curve on $[0,1]$. (The blancmange curve image  in Subfigure (\subref{takagifig}) is taken from Wiki Commons.)}
	\label{takagidilation}
\end{figure}
\begin{theorem} \label{takagi2} \cite{coronado20}
	Let $\delta(n)$ be the number of $D$-nodes in a divide-and-conquer tree on $n=2^k+r$ leaves, with $0 \le r \le 2^k$. Let $\tau(x)$ be the Takagi function on $x \text{, with }0 \le x \le 1$. Then 
	$$
	\tau \left( \frac{r}{2^k} \right) = \frac{\delta(2^k+r)}{2^k}.
	$$
\end{theorem}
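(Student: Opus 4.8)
The plan is to reduce the statement to a recurrence for $\delta$ coming from the tree structure and then match it, by induction on $k$, against a self-similarity recurrence for $\tau$ read off Definition~\ref{takagi_newdef}. First I would record how $D$-nodes arise: in a divide-and-conquer tree a node with $m$ leaf descendants splits them into a left part of $\ceil{m/2}$ and a right part of $\floor{m/2}$, so by the definition of a $D$-node this node is a $D$-node exactly when $\ceil{m/2}\neq\floor{m/2}$, i.e. when $m$ is odd. Counting the root separately from its two subtrees yields the recurrence
\[
\delta(m) \;=\; [\,m \text{ odd}\,] \;+\; \delta\!\left(\ceil{m/2}\right) \;+\; \delta\!\left(\floor{m/2}\right),\qquad \delta(1)=0,
\]
where $[\,\cdot\,]$ is the Iverson bracket. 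Rewriting the target as $\delta(2^k+r)=2^k\,\tau(r/2^k)$ for all $k\ge 0$ and $0\le r\le 2^k$, I would induct on $k$, the base case $k=0$ (so $n\in\{1,2\}$, $\delta=0$, $\tau(0)=\tau(1)=0$) being immediate.

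For the inductive step I split on the parity of $r$. If $r=2s$ is even, the root of the tree on $n=2^k+r$ leaves is an $S$-node and both subtrees have $2^{k-1}+s$ leaves, so $\delta(2^k+r)=2\,\delta(2^{k-1}+s)$; since $r/2^k$ and $s/2^{k-1}$ are the same dyadic rational (and the defining series of Definition~\ref{takagi_newdef} is consistent under appending a trailing zero), the induction hypothesis closes this case at once. If $r=2s+1$ is odd, the root is a $D$-node whose subtrees have $2^{k-1}+s+1$ and $2^{k-1}+s$ leaves (both within range for the hypothesis), and the recurrence together with the induction hypothesis turns the claim into the single identity
\[
\tau\!\left(\frac{2s+1}{2^{k}}\right)
=\frac12\left[\tau\!\left(\frac{s}{2^{k-1}}\right)+\tau\!\left(\frac{s+1}{2^{k-1}}\right)\right]+\frac{1}{2^{k}}.
\]
This is exactly the classical midpoint (self-similarity) relation of the Takagi function: the value at the midpoint of a level-$(k-1)$ dyadic interval is the average of the endpoint values plus half the interval length.

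The main obstacle is proving this midpoint relation directly from Definition~\ref{takagi_newdef}, and the only delicate point is the carry in $s\mapsto s+1$. Two of the three terms are clean: since the binary digits of $2s+1$ are those of $s$ with a $1$ appended (no carry occurs), reindexing the $\ell_i$ and summing the geometric tail gives $\tau\!\left(\frac{2s+1}{2^{k}}\right)=\sum_{i=1}^{k-1}\ell_i(s)2^{-i}+k\,2^{-k}$ (the dependence on the digit sum cancelling), and likewise $\tau\!\left(\frac{s}{2^{k-1}}\right)=\sum_{i=1}^{k-1}\ell_i(s)2^{-i}+\nu(s)2^{-(k-1)}$, where $\nu(s)$ denotes the number of $1$'s in the binary expansion of $s$. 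Substituting these, the midpoint relation collapses to the slope identity
\[
2^{\,k-1}\left[\tau\!\left(\frac{s+1}{2^{k-1}}\right)-\tau\!\left(\frac{s}{2^{k-1}}\right)\right]=(k-1)-2\,\nu(s),
\]
which I would establish by tracking how adding $1$ flips a trailing run of $1$'s to $0$'s (and one $0$ to a $1$) and how this propagates through the $\ell_i$. Alternatively, one can sidestep the bookkeeping by first deriving the one-step functional equation $\tau(x)=\min(x,1-x)+\tfrac12\,\tau(2x)$ from the series and then proving the midpoint relation by a short secondary induction; either way, the carry analysis is where the real work lies.
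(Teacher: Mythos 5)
Your proposal cannot be checked against a proof in the paper, because the paper contains none: Theorem~\ref{takagi2} is quoted from \cite{coronado20} (it is Corollary~4 there), so any argument here is necessarily your own. The route you choose is the natural self-contained one, and its combinatorial half is complete and correct. The splitting rule of a divide-and-conquer tree gives $\delta(m)=[\,m\text{ odd}\,]+\delta\!\left(\ceil{m/2}\right)+\delta\!\left(\floor{m/2}\right)$, which is exactly the paper's Theorem~\ref{exp_D_recursive} (itself also quoted from \cite{coronado20}); your induction on $k$ has the right ranges ($s\le 2^{k-1}$ in the even case, $s+1\le 2^{k-1}$ in the odd case); the even case correctly notes that well-definedness of $\tau$ under $r/2^k=s/2^{k-1}$ must be checked against Definition~\ref{takagi_newdef}; and the odd case correctly reduces the theorem to the dyadic midpoint relation $\tau\!\left(\frac{2s+1}{2^k}\right)=\frac12\left[\tau\!\left(\frac{s}{2^{k-1}}\right)+\tau\!\left(\frac{s+1}{2^{k-1}}\right)\right]+2^{-k}$. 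That relation is true; it is the same identity the paper records later as the equality case ($\rho_1=0$) of Theorem~\ref{th_yet_another_takagi_prop}, and your series manipulations reducing it to the slope identity $2^{k-1}\left[\tau\!\left(\frac{s+1}{2^{k-1}}\right)-\tau\!\left(\frac{s}{2^{k-1}}\right)\right]=(k-1)-2s_1(s)$ check out against Definition~\ref{takagi_newdef}, including the cancellation of the digit sum in the geometric tail.

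The one real hole is that this slope identity is announced with a plan (``track the carry'') rather than proved, and that is precisely where the content of the analytic half lies: the identity is equivalent to the paper's Theorem~\ref{takagi_another_recurrence}, i.e., to Kr\"uppel's formula (2.2) in \cite{kruppel07}. Note that you could not simply cite the paper's version, since the paper derives Theorem~\ref{takagi_another_recurrence} from Theorem~\ref{takagi2}; you avoid that circularity by proposing to prove it straight from the definition, which is the right move, but as written it remains a sketch. The carry bookkeeping does go through (a trailing block of ones flipping to zeros alters only the $\ell_i$ in the affected window, and the changes telescope), yet your own fallback is cleaner: derive the functional equation $\tau(x)=\min(x,1-x)+\frac12\tau(2x \bmod 1)$ from the series once, and then prove the midpoint relation by induction on $k$, since doubling maps level-$k$ midpoints to level-$(k-1)$ midpoints and the relation propagates with the factor $\frac12$ from the base case $\tau(1/2)=1/2$. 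With either of those two computations actually carried out, your argument is a complete and correct proof, and it has the virtue of making the paper's central imported theorem self-contained.
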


\section{\texorpdfstring{$D$}--nodes in divide-and-conquer trees} \label{pairwise_cnaos}
In this section, we develop several identities on $\delta(n)$, the number of $D$-nodes in a divide-and-conquer tree. The first subsection is general, and the second develops identities in terms of $s_1(n)$, the Hamming weight of $n$. We will then apply Theorem~\ref{takagi2} to these in Section~\ref{dnodes2takagi}, for some quick identities on the Takagi function.
\subsection{Counting \texorpdfstring{$D$}--nodes in a divide-and-conquer tree}
We state here formulas for the number of $D$-nodes in a divide-and-conquer form with $n$ leaves. The number of $D$-nodes corresponds to OEIS sequence \seqnum{A296062}
\cite{OEIS}, as noted by Coronado et al.\ \cite{coronado20}.
\begin{theorem}\label{exp_D_recursive} \cite{coronado20}
	The number of $D$-nodes in a divide-and-conquer $SD$-tree with $n$ leaf nodes is
	$$
	\delta(n)=
	\begin{cases}
		2\delta(m), & \text{if}\ n=2m; \\
		\delta(m)+\delta(m+1)+1, & \text{if}\ n=2m+1;\\
		0, & \text{if}\ n=1.
	\end{cases}
	$$
\end{theorem}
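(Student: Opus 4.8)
The plan is to argue directly from the recursive structure of the divide-and-conquer tree, partitioning the $D$-nodes into the one possibly sitting at the root together with those contained in each of the two subtrees hanging off the root. First I would dispose of the base case: the tree on $n=1$ leaf is a single leaf with no interior nodes, so $\delta(1)=0$. For $n>1$, the structural fact I need is that the root of the divide-and-conquer tree on $n$ leaves sends $\ceil{n/2}$ leaves to one child and $\floor{n/2}$ to the other, and that each of the two subtrees is \emph{itself} a divide-and-conquer tree on that many leaves. This follows from the defining condition (the leaf counts of the two children of any node differ by at most $1$, which forces the split to be as even as possible, namely $\ceil{n/2}$ and $\floor{n/2}$) together with the uniqueness up to isomorphism noted in the excerpt.

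With this in hand the argument splits on the parity of $n$. When $n=2m$, the root sends $m$ leaves to each child, so the two subtrees are both copies of the divide-and-conquer tree on $m$ leaves; since their leaf counts agree the root is an $S$-node and contributes no $D$-node, giving $\delta(2m)=\delta(m)+\delta(m)=2\delta(m)$. When $n=2m+1$, the root sends $m+1$ leaves to one child and $m$ to the other; these differ, so the root is itself a $D$-node contributing $1$, while the two subtrees are the divide-and-conquer trees on $m+1$ and $m$ leaves, giving $\delta(2m+1)=1+\delta(m+1)+\delta(m)$. Summing the root contribution ($0$ or $1$) with the $D$-node counts of the two subtrees yields the stated recurrence in each case.

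The only step requiring care is the structural claim that the two subtrees of a divide-and-conquer tree are again divide-and-conquer trees with the stated leaf counts; once that is in place the rest is a two-line case split. I expect this to be the main (and essentially the only) obstacle. It is handled by observing that the divide-and-conquer condition is a \emph{local} condition imposed on every node, so it holds in particular at every node lying in a given subtree, making each subtree a divide-and-conquer tree in its own right, and by noting that $\ceil{n/2}$ and $\floor{n/2}$ are the unique pair of nonnegative integers summing to $n$ and differing by at most $1$.
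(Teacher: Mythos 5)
Your proof is correct. Note that the paper itself gives no proof of this statement: it is quoted from the cited reference \cite{coronado20}, so there is nothing internal to compare against. Your argument is the natural one and is complete: the base case $\delta(1)=0$ is immediate, the divide-and-conquer condition forces the root split to be $\ceil{n/2}$ and $\floor{n/2}$ (the unique pair of nonnegative integers summing to $n$ and differing by at most $1$), the condition is inherited by subtrees since it is imposed node-by-node, and uniqueness of the divide-and-conquer tree up to isomorphism (stated in the paper) is exactly what makes $\delta$ a well-defined function of the leaf count, so that the subtree contributions can be written as $\delta(m)$ and $\delta(m+1)$. The parity case split then correctly accounts for the root being an $S$-node when $n=2m$ and a $D$-node when $n=2m+1$.
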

The following corollary applies to $n$ when $n$ is not a power of $2$. 
\begin{corollary} 
	\label{yet_another_dnodes_prop}
	Let $n=2^k+r$, where $0<r<2^k$, and let $\rho_1$ be the position of the smallest non-$0$ bit in $r$. The number of $D$-nodes in a divide-and-conquer $SD$-tree with $n$ leaf nodes satisfies the equation
	$$
	\delta(n)=\frac{1}{2}(\delta(n-1)+\delta(n+1))+1-\rho_1.
	$$
\end{corollary}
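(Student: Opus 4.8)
The plan is to use strong induction on $n$, unfolding the recursion of Theorem~\ref{exp_D_recursive} and splitting according to the parity of $n$. First I would observe that since $0<r<2^k$ forces $\rho_1<k$, adding $2^k$ to $r$ leaves the low-order bits untouched, so $\rho_1$ is exactly the position of the lowest set bit of $n$ itself; in particular $n$ is odd precisely when $\rho_1=0$. This is what lets the parity split track $\rho_1$.

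For the base case $\rho_1=0$, $n$ is odd, so write $n=2m+1$; then $n-1=2m$ and $n+1=2(m+1)$. Applying Theorem~\ref{exp_D_recursive} three times gives $\delta(n)=\delta(m)+\delta(m+1)+1$, together with $\delta(n-1)=2\delta(m)$ and $\delta(n+1)=2\delta(m+1)$, whence
$$
\frac{1}{2}[\delta(n-1)+\delta(n+1)]+1-\rho_1=\delta(m)+\delta(m+1)+1=\delta(n),
$$
so the identity holds directly, with no appeal to induction.

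For the inductive step $\rho_1\ge 1$, $n$ is even, so write $n=2m$; then $n-1=2(m-1)+1$ and $n+1=2m+1$. The recursion now yields $\delta(n)=2\delta(m)$, $\delta(n-1)=\delta(m-1)+\delta(m)+1$, and $\delta(n+1)=\delta(m)+\delta(m+1)+1$, and averaging gives
$$
\frac{1}{2}[\delta(n-1)+\delta(n+1)]+1-\rho_1=\frac{1}{2}[\delta(m-1)+\delta(m+1)]+\delta(m)+2-\rho_1.
$$
Since $\delta(n)=2\delta(m)$, proving the claim for $n$ reduces to establishing $\delta(m)=\frac{1}{2}[\delta(m-1)+\delta(m+1)]+2-\rho_1$. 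The crux is then to recognize $m$ as a valid instance of the corollary: writing the even $r$ as $r=2r'$, we find $m=n/2=2^{k-1}+r'$ with $0<r'<2^{k-1}$, and the lowest set bit of $r'$ sits at position $\rho_1-1$. Applying the induction hypothesis to $m$ (legitimate because $m<n$ and $m$ is not a power of $2$) gives $\delta(m)=\frac{1}{2}[\delta(m-1)+\delta(m+1)]+1-(\rho_1-1)$, which is exactly the required relation.

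The computations are routine once the two cases are in place; the only real care needed is the bookkeeping in the reduction $n\mapsto m=n/2$ — confirming that halving an even $n$ drops $\rho_1$ by exactly one while preserving $0<r'<2^{k-1}$ — and checking the boundary instances (for example $r+1=2^k$ or $r'=1$, where a neighboring argument is a power of $2$ and its $\delta$ value vanishes). I do not expect any genuine obstacle beyond this bookkeeping.
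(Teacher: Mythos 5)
Your proof is correct, and it shares the paper's overall skeleton: split on the parity of $n$, settle the odd case ($\rho_1=0$) directly from Theorem~\ref{exp_D_recursive}, and handle the even case by induction on $m=n/2$, where the lowest set bit drops from $\rho_1$ to $\rho_1-1$. Where you genuinely diverge is in how the even case is closed. The paper works \emph{upward}: it applies the induction hypothesis to $n/2$ and doubles, producing $\tfrac{1}{2}(\delta(n-2)+\delta(n+2))+4-2\rho_1$, and must then invoke an auxiliary identity --- Equation~(\ref{n_plusminus_2}), itself obtained by applying the already-proved odd case to the odd neighbors $n-1$ and $n+1$ --- to eliminate the $\delta(n\pm 2)$ terms in favor of $\delta(n\pm 1)$ and $\delta(n)$. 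You work \emph{downward}: expanding $\delta(n-1)=\delta(m-1)+\delta(m)+1$ and $\delta(n+1)=\delta(m)+\delta(m+1)+1$ directly via Theorem~\ref{exp_D_recursive}, so that after averaging, the claim for $n$ collapses to precisely the induction hypothesis at $m$, since $1-(\rho_1-1)=2-\rho_1$. This buys a shorter and more transparent argument: the auxiliary equation and the $\delta(n\pm2)$ terms never appear, and one sees exactly why the correction constant decrements in lockstep with $\rho_1$ under halving. Your side conditions also check out: for even $n=2^k+r$ with $0<r<2^k$, the integer $m=2^{k-1}+r/2$ satisfies $0<r/2<2^{k-1}$, so it is a legitimate instance of the corollary, and the boundary situations you flag (a neighbor such as $n+1$ or $m-1$ being a power of $2$) cause no harm because $\delta$ is defined and the recursion of Theorem~\ref{exp_D_recursive} applies there regardless.
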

\begin{proof}
	If $r=2d+1$ is odd, then $\rho_1=0$, and 
	\begin{align*}
		\delta(n) &= \delta(2(2^{k-1}+d)+1)\\
		&= \delta(2^{k-1}+d)+\delta(2^{k-1}+d+1)+1&\text{(by Theorem \ref{exp_D_recursive})}&\\
		&= \frac{1}{2}(2\cdot \delta(2^{k-1}+d)+2\cdot \delta(2^{k-1}+d+1)) +1\\
		&= \frac{1}{2}(\delta(2^k+2d)+\delta(2^k+2d+2)) +1 &\text{(by Theorem \ref{exp_D_recursive})}\\
		&= \frac{1}{2}(\delta(n-1)+\delta(n+1)) +1-0 \\
		&= \frac{1}{2}(\delta(n-1)+\delta(n+1)) +1 -\rho_1. 
	\end{align*}
	Now let $r=2d$ be even. First, we use what has been shown for $n$ odd:
	\begin{equation*} \label{1steq_69X}
		\delta(n-1) 
		= \frac{1}{2}(\delta(n-2)+\delta(n)) +1 
		= \frac{1}{2}\delta(n-2)+\frac{1}{2}\delta(n) +1, 
	\end{equation*}
	and
	\begin{equation*}\label{2ndeq_6X9}
		\delta(n+1) 
		= \frac{1}{2}(\delta(n)+\delta(n+2)) +1 
		= \frac{1}{2}\delta(n+2) +\frac{1}{2}\delta(n)+1.
	\end{equation*}
	Thus
	$$
	\delta(n-1)+\delta(n+1) 
	= \delta(n) +\frac{1}{2}(\delta(n-2)+\delta(n+2)) +2, 
	$$
	and
	\begin{equation} \label{n_plusminus_2}
		\delta(n-1)+\delta(n+1)-2-\delta(n) = \frac{1}{2}(\delta(n-2)+\delta(n+2)). 
	\end{equation}
	Now we apply induction on $k$, noting that the smallest non-$0$ bit in $\frac{n}{2}=(2^{k-1}+d)$ is ($\rho_1-1$):
	\begin{align*}
		\delta(n) &= \delta(2(2^{k-1}+d)) \\
		&= 2\delta(2^{k-1}+d)& \text{(by Theorem \ref{exp_D_recursive})}&\\
		&= 2\cdot (\frac{1}{2}(\delta(2^{k-1}+d-1)+\delta(2^{k-1}+d+1))+1-(\rho_1-1))& \text{(by induction)}\\
		&=  \frac{1}{2}(2\cdot\delta(2^{k-1}+d-1)+2\cdot\delta(2^{k-1}+d+1))+2-(2\cdot\rho_1-2) \\
		&=  \frac{1}{2}(\delta(2^k+2d-2)+\delta(2^k+2d+2))+2-(2\cdot\rho_1-2)& \text{(by Theorem \ref{exp_D_recursive})}\\
		&=  \frac{1}{2}(\delta(n-2)+\delta(n+2))+4-2\cdot\rho_1 \\
		&=  \delta(n-1)+\delta(n+1)-2-\delta(n)+4-2\cdot\rho_1& \text{(by Equation (\ref{n_plusminus_2}))}\\
		&=  \delta(n-1)+\delta(n+1)-\delta(n)+2-2\cdot\rho_1.
	\end{align*}
	So
	$$
	2\cdot \delta(n) 
	= \delta(n-1)+\delta(n+1)+2-2\cdot\rho_1,
	$$
	and
	$$
	\delta(n) 
	= \frac{1}{2}(\delta(n-1)+\delta(n+1))+1-\rho_1.
	$$
\end{proof}

\begin{theorem}
	\label{exp_D_closed}
	The number of $D$-nodes in a divide-and-conquer $SD$-tree with $n$ leaf nodes  is
	\begin{equation} \label{eq_exp_D_closed}
		\delta(n)=\sum_{i=0}^{\floor{\log_2(n)}-1} \lambda_i(n) \text{, where } \lambda_i(n)= 
		\begin{cases}
			(n\bmod 2^i), & \text{ if } n_i=0;\\
			2^i-(n\bmod 2^i), & \text{ if } n_i=1.\\
		\end{cases}	  
	\end{equation}
	An explicit form is
	\begin{equation} \label{eq_exp_D_closed_explicit}
		\delta(n)=\sum_{i=0}^{\floor{\log_2(n)}-1}  (n_i\cdot 2^i + (-1)^{n_i}\cdot (n \bmod 2^i)).
	\end{equation}
\end{theorem}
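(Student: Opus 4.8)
The plan is to prove the closed form~(\ref{eq_exp_D_closed}) by strong induction on $n$ using the recurrence of Theorem~\ref{exp_D_recursive}, and then to dispose of the explicit form~(\ref{eq_exp_D_closed_explicit}) as a mere reformulation: substituting $n_i=0$ into $n_i\cdot 2^i+(-1)^{n_i}(n\bmod 2^i)$ gives $n\bmod 2^i$, and substituting $n_i=1$ gives $2^i-(n\bmod 2^i)$, so the two summands agree with $\lambda_i(n)$ term by term. Writing $S(n)=\sum_{i=0}^{\floor{\log_2 n}-1}\lambda_i(n)$, the base case $n=1$ is immediate, since the sum is empty and $\delta(1)=0$.

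For the even case $n=2m$ I would first record two elementary facts about the low bits: doubling shifts the binary expansion, so $n_0=0$ and $n_i=m_{i-1}$ for $i\ge 1$, while $2m\bmod 2^i=2(m\bmod 2^{i-1})$. Feeding these into the definition of $\lambda$ yields the unconditional identities $\lambda_0(2m)=0$ and $\lambda_i(2m)=2\lambda_{i-1}(m)$ for $i\ge 1$; since $\floor{\log_2(2m)}=\floor{\log_2 m}+1$, summing gives $S(2m)=2S(m)$, which by the inductive hypothesis $S(m)=\delta(m)$ matches $\delta(2m)=2\delta(m)$.

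The odd case $n=2m+1$ is where the work lies. The same bit bookkeeping ($n_0=1$, $n_i=m_{i-1}$, and $2m+1\bmod 2^i=2(m\bmod 2^{i-1})+1$) gives $\lambda_0(2m+1)=1$ and $\lambda_i(2m+1)=2\lambda_{i-1}(m)+(-1)^{m_{i-1}}$ for $i\ge 1$, so that
\[
S(2m+1)=1+2S(m)+\sum_{j=0}^{\floor{\log_2 m}-1}(-1)^{m_j}.
\]
Using $S(m)=\delta(m)$, matching this against the target $\delta(2m+1)=\delta(m)+\delta(m+1)+1$ reduces exactly to the difference identity
\[
\delta(m+1)-\delta(m)=\sum_{j=0}^{\floor{\log_2 m}-1}(-1)^{m_j}.
\]
I expect this identity to be the main obstacle, since passing from $m$ to $m+1$ can propagate a carry through a block of trailing ones. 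The cleanest route is to isolate it as a lemma proved by its own induction on $m$ through Theorem~\ref{exp_D_recursive}: for $m=2t$ the recurrence gives $\delta(m+1)-\delta(m)=\delta(t+1)-\delta(t)+1$, and for $m=2t+1$ it gives $\delta(m+1)-\delta(m)=\delta(t+1)-\delta(t)-1$. In each case the right-hand sum splits off its lowest term $(-1)^{m_0}=\pm 1$, and what remains is precisely the inductive value for $t$; this makes the carry invisible, because the recurrence already encodes the bridge between $2t+1$ and $2(t+1)$.

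As an alternative worth recording, the formula also admits a direct combinatorial reading. A node of the divide-and-conquer tree is a $D$-node precisely when its subtree has an odd number of leaves, and at depth $d\le\floor{\log_2 n}$ the $2^d$ subtrees have sizes $\ceil{n/2^d}$ or $\floor{n/2^d}$, with exactly $n\bmod 2^d$ of the former. Since $\floor{n/2^d}$ has parity $n_d$, the number of odd-sized subtrees at depth $d$ is $n\bmod 2^d$ when $n_d=0$ and $2^d-(n\bmod 2^d)$ when $n_d=1$, that is, $\lambda_d(n)$; summing over $0\le d\le\floor{\log_2 n}-1$ and noting that the omitted top level contributes only size-$1$ leaves recovers~(\ref{eq_exp_D_closed}).
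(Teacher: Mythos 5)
Your proposal is correct, but your primary argument takes a genuinely different route from the paper's. The paper proves Equation~(\ref{eq_exp_D_closed}) directly and combinatorially: a node of a divide-and-conquer tree is a $D$-node if and only if it has an odd number of leaf descendants, and at level $i$ exactly $\lambda_i(n)$ of the $2^i$ nodes have odd subtree size; summing over levels gives the result. That is precisely the ``alternative combinatorial reading'' you record at the end (your only slip there is the claim that the omitted top level ``contributes only size-$1$ leaves'': level $\floor{\log_2 n}$ in general also contains $n\bmod 2^{\floor{\log_2 n}}$ subtrees of size $2$, but these are $S$-nodes, so the conclusion that the omitted levels contribute no $D$-nodes stands). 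Your main proof instead derives the closed form from the recurrence of Theorem~\ref{exp_D_recursive} by strong induction, which is exactly the route the paper says it avoids (``Instead we prove Theorem~\ref{exp_D_closed} directly by analyzing $D$-nodes''). Your bookkeeping is right: $\lambda_0(2m)=0$ and $\lambda_i(2m)=2\lambda_{i-1}(m)$ give the even case, $\lambda_0(2m+1)=1$ and $\lambda_i(2m+1)=2\lambda_{i-1}(m)+(-1)^{m_{i-1}}$ give the odd case, and your auxiliary identity
\begin{equation*}
\delta(m+1)-\delta(m)=\sum_{j=0}^{\floor{\log_2 m}-1}(-1)^{m_j}
\end{equation*}
is correctly established by its own induction, since both sides increase by $1$ in passing from $t$ to $2t$ and decrease by $1$ in passing from $t$ to $2t+1$, by Theorem~\ref{exp_D_recursive}. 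It is worth noting that, because the bits of $m$ below the leading one comprise $s_1(m)-1$ ones, this sum equals $\floor{\log_2 m}-2\cdot s_1(m)+2$, so your lemma is exactly the paper's later Theorem~\ref{dnodes_another_recurrence}; the paper derives that theorem \emph{from} Theorem~\ref{exp_D_closed} (via Lemmas~\ref{dnodes_diff_even} and~\ref{dnodes_diff_odd}), whereas you prove it independently from the recurrence and use it as input, so there is no circularity. What each approach buys: the paper's proof is shorter and explains what each $\lambda_i(n)$ counts (the $D$-nodes at level $i$), while yours is purely mechanical from the recurrence, needs no tree geometry, and yields Theorem~\ref{dnodes_another_recurrence} as a free by-product.
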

\begin{proof}
	Because every $SD$-tree is a full complete binary tree, there are $2^i$ nodes at level  $i$ except for the bottom level. At each level, the descendant leaf nodes are almost evenly divided between the nodes: at the $i^\text{th}$ level, $2^i-(n\bmod 2^i)$ nodes have $\floor{\frac{n}{2^i}}$ leaf descendants, and $(n\bmod 2^i)$ of these nodes have $\floor{\frac{n}{2^i}}+1$ leaf descendants. 
	
	Consider the binary representation of the number $n$ where the bits are ordered from least significant to most significant.  
	A node in a divide-and-conquer tree is an $D$-node if and only if it has an odd number of leaf descendants. 
	If $n_i$ is $0$, then $\floor{\frac{n}{2^i}}$ is even and $(n\bmod 2^i)$ nodes have an odd number of  descendants. If $n_i$ is $1$, then $\floor{\frac{n}{2^i}}$ is odd, and $2^i-(n\bmod 2^i)$ nodes have an odd number of  descendants. So the number of $D$-nodes at level $i$ is
	\begin{equation*}\label{eqA_DX}
		\beta_i(n) =
		\begin{cases}
			n\bmod 2^i, & \text{ if $n_i$ is }  0;\\
			2^i-(n\bmod 2^i), & \text{ if $n_i$ is }  1.\\
		\end{cases}	  
	\end{equation*}  
	Equation (\ref{eq_exp_D_closed}) is obtained by summing across the levels.
	
	So the number of $D$-nodes at level $i$ is
	$$
	n_i\cdot 2^i + (-1)^{n_i}\cdot (n \bmod 2^i).
	$$
	Equation (\ref{eq_exp_D_closed_explicit}) is obtained by summing across the levels.
\end{proof}

An extensive analysis of solutions to divide-and-conquer recurrences is provided in \cite{hwang17}. Theorem~\ref{exp_D_closed} might be obtained from Theorem \ref{exp_D_recursive} using the methods in that paper. Instead we prove Theorem \ref{exp_D_closed} directly by analyzing $D$-nodes.

\subsection{Counting \texorpdfstring{$D$}--nodes on a divide-and-conquer tree in terms of Hamming weight}
We begin this section with some simple lemmas on Hamming weight used throughout this section that appear in many papers. They can be seen by inspection.
\begin{definition}[\textbf{Hamming weight}]
	The \emph{Hamming weight} of $n$,  $\sum_{i=0}^k n_i$, is the number of $1$s in the binary expansion of $n$, and is denoted in this paper by $s_1(n)$.
\end{definition}
\begin{lemma}\label{weightlemma_addone} 
	The Hamming weight $s_1(n) = s_1(r)+1$.
\end{lemma}
\begin{lemma}\label{weightlemma_even} 
	If $n$ is even, then $s_1(n+1) = s_1(n)+1$.
\end{lemma}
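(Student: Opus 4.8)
The plan is to argue directly from the binary expansion, using only that an even number has least significant bit equal to $0$. Writing $n = \sum_{i=0}^k n_i \cdot 2^i$ as in Notation~\ref{not_n}(iii), evenness of $n$ is exactly the condition $n_0 = 0$. The key observation is that adding $1$ to such an $n$ triggers no carry: the addition simply flips the unit bit, leaving every higher bit untouched.

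First I would record that $(n+1)_0 = 1$ and $(n+1)_i = n_i$ for all $i \ge 1$. This follows immediately from $n_0 = 0$, since $n+1 = \left(\sum_{i \ge 1} n_i \cdot 2^i\right) + 1$ and the summand $1$ occupies precisely the vacant unit position. Summing the bits then gives $s_1(n+1) = 1 + \sum_{i \ge 1} n_i = 1 + s_1(n)$, where the final equality uses $n_0 = 0$ so that the missing $i=0$ term contributes nothing. There is essentially no obstacle here: the only point requiring care is the claim that no carry occurs, which is guaranteed precisely by the hypothesis that $n$ is even and is exactly what distinguishes this case from the odd case, where a carry can cascade through a run of $1$s and the Hamming weight need not simply increase by one.
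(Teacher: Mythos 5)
Your proof is correct: the no-carry observation ($n_0 = 0$, so adding $1$ merely sets the unit bit and leaves all higher bits unchanged) is exactly the right argument. The paper gives no explicit proof --- it groups this among simple Hamming-weight lemmas that ``can be seen by inspection'' --- and your argument is precisely that inspection made explicit, so the approaches coincide.
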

\begin{lemma}\label{weightlemma_odd} 
	If $n$ is odd, then $s_1(n-1)=s_1(n)-1$. 
\end{lemma}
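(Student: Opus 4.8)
The plan is to argue directly from the binary representation, since the statement is really just a local observation about the least significant bit. If $n$ is odd, then by definition $n_0 = 1$. Subtracting $1$ from an odd integer flips this bottom bit from $1$ to $0$ and leaves every higher bit untouched, because no borrow can propagate past a position that already holds a $1$. Thus the binary expansion of $n-1$ agrees with that of $n$ except for the single $0$th position, which changes from $1$ to $0$. Summing the bits then gives $s_1(n-1) = \sum_{i\ge 1} n_i = \big(\sum_{i\ge 0} n_i\big) - 1 = s_1(n) - 1$.

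Alternatively — and perhaps more cleanly, to avoid re-deriving the no-borrow fact — I would deduce this from Lemma~\ref{weightlemma_even}, which has just been established. If $n$ is odd, then $m := n-1$ is even, so applying Lemma~\ref{weightlemma_even} to $m$ yields $s_1(m+1) = s_1(m) + 1$, that is, $s_1(n) = s_1(n-1) + 1$. Rearranging gives the claim immediately. This route has the advantage of reusing the parity argument already packaged in the preceding lemma rather than repeating the bit-level bookkeeping.

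There is essentially no obstacle here: the only point requiring a moment's care is the claim that the subtraction induces no carry/borrow into the higher bits, which is exactly where the hypothesis that $n$ is odd (equivalently $n_0 = 1$) is used. Since the paper has remarked that these Hamming-weight lemmas ``can be seen by inspection,'' I would keep the write-up to a single sentence invoking Lemma~\ref{weightlemma_even} with $m = n-1$, and reserve the explicit bit-flip description only as intuition.
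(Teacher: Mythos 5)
Your proposal is correct, and it matches the paper, which offers no written proof at all for this lemma beyond the remark that these Hamming-weight facts ``can be seen by inspection'': your bit-flip argument (the least significant bit of an odd $n$ is $1$, so subtracting $1$ clears it without any borrow) is exactly that inspection made explicit. The alternative derivation from Lemma~\ref{weightlemma_even} with $m = n-1$ is equally valid and a tidy reuse, but either one-liner suffices.
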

\begin{lemma} \label{weightlemma} 
	If $r<2^{k-1}$ is odd, 
	then $s_1(2^k-r)=1+k-s_1(r)$.
\end{lemma}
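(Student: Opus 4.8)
The plan is to reduce the subtraction $2^k - r$ to a bitwise complement, whose Hamming weight is immediate, and then absorb the resulting off-by-one using the parity of $r$ via Lemma~\ref{weightlemma_odd}.

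First I would rewrite
$$
2^k - r = (2^k - 1) - (r-1).
$$
The reason for shifting by one is that $2^k - 1 = \underbrace{1\cdots1}_{k}$ is the $k$-bit all-ones word, while $r-1$ is even and nonnegative with $0 \le r-1 < 2^k$ (the hypothesis $r < 2^{k-1}$ comfortably guarantees this), so its $k$-bit representation is well-defined.

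The key step is the observation that subtracting any $m$ with $0 \le m < 2^k$ from the all-ones word $2^k - 1$ requires no borrowing: in each of the $k$ bit positions one computes $1 - m_i \in \{0,1\}$, so the difference is exactly the $k$-bit complement $\widetilde m$ of $m$. Equivalently $m + \widetilde m = 2^k - 1$, whence $s_1(m) + s_1(\widetilde m) = k$, i.e. $s_1(\widetilde m) = k - s_1(m)$. Applying this with $m = r-1$ gives $s_1(2^k - r) = k - s_1(r-1)$. Finally, since $r$ is odd, Lemma~\ref{weightlemma_odd} yields $s_1(r-1) = s_1(r) - 1$, and substituting gives $s_1(2^k - r) = k - (s_1(r)-1) = 1 + k - s_1(r)$, as claimed.

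I expect the only genuine obstacle to be making the complement argument rigorous, namely justifying that no borrow propagates when subtracting from the all-ones word and confirming that $r-1$ really fits in $k$ bits (which the bound $r < 2^{k-1}$ secures). Once the identity $s_1(\widetilde m) = k - s_1(m)$ is in hand, the rest is a single application of the parity lemma.
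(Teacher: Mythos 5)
Your proof is correct and takes essentially the same approach as the paper: both reduce $2^k - r$ to a bitwise complement (the paper by directly writing the bit pattern $0\,1\,\overline{r_{k-2}}\dots\overline{r_1}\,1$, you by the identity $2^k-r=(2^k-1)-(r-1)$ with the no-borrow observation) and then count the ones. Your version is if anything slightly cleaner, since it makes the borrow-free step explicit and never actually needs the hypothesis $r<2^{k-1}$, only $0<r<2^k$ with $r$ odd.
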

\begin{proof}
	The binary representation of $2^k$ is $1\ 0...0$, where there are $k$ $0$s following the leading $1$. Let the binary representation of $r$ be $0\ 0\ r_{k-2}\cdots\ r_1\ 1$, since $r<2^{k-1}$ is odd. Subtracting $r$ from $2^k$ gives $0\ 1\ \overline{r_{k-2}}\cdots\ \overline{r_1}\ 1$, where $\overline{r_i}$ is the negation of $r_i$. The lemma follows.
\end{proof}
We use the above lemmas to prove a series of assertions on the $D$-nodes of a tree with $n$ leaves.
\begin{lemma}
	\label{lambdadiff}
	Let $n=2^k+r$, with $r$ odd and $0 < r < 2^k$. Let $\lambda$ be as in Theorem~\ref{exp_D_closed}, with $0 < i<k$. Then
	$$
	\lambda_i(n)-\lambda_i(n-1)  =
	\begin{cases}
		1, & \text{ if }\ n_i=0;\\
		-1, & \text{ if }\ n_i=1.\\
	\end{cases}
	$$
\end{lemma}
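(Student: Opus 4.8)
The plan is to exploit the parity of $n$. Since $r$ is odd and $0<r<2^k$ forces $k\ge 1$, the term $2^k$ is even, so $n=2^k+r$ is odd and its lowest bit satisfies $n_0=1$. Subtracting $1$ from an odd number flips only the least significant bit and triggers no borrow, so $(n-1)_0=0$ while $(n-1)_j=n_j$ for every $j\ge 1$. In particular, for the indices of interest ($0<i<k$) we have $n_i=(n-1)_i$. This is the crucial point: it guarantees that $\lambda_i(n)$ and $\lambda_i(n-1)$ are selected by the \emph{same} branch of the piecewise formula of Theorem~\ref{exp_D_closed}, so the difference will come entirely from the residue terms rather than from a change of case.

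Next I would record the effect on the residues. The low $i$ bits of $n$ form the string $n_{i-1}\cdots n_1 n_0$ with $n_0=1$, and those of $n-1$ are the same string with the final bit replaced by $0$. Hence $(n-1)\bmod 2^i=(n\bmod 2^i)-1$ for all $i\ge 1$, again because no borrow propagates past bit $0$.

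It then remains only to substitute into the two cases. If $n_i=0$, then $\lambda_i(n)=n\bmod 2^i$ and $\lambda_i(n-1)=(n-1)\bmod 2^i=(n\bmod 2^i)-1$, so $\lambda_i(n)-\lambda_i(n-1)=1$. If $n_i=1$, then $\lambda_i(n)=2^i-(n\bmod 2^i)$ and $\lambda_i(n-1)=2^i-\bigl((n\bmod 2^i)-1\bigr)$, so $\lambda_i(n)-\lambda_i(n-1)=-1$. These match the claimed values, completing the argument.

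There is essentially no deep obstacle here; the only thing that must be handled with care is confirming that forming $n-1$ flips bit $0$ and nothing else, which is immediate from $n$ being odd. That single observation simultaneously fixes the case selector $n_i$ across $n$ and $n-1$ and yields the clean residue relation $(n-1)\bmod 2^i=(n\bmod 2^i)-1$, after which the result is pure substitution.
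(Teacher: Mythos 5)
Your proof is correct and follows essentially the same route as the paper: both arguments rest on the observation that since $r$ (equivalently $n$) is odd, forming $n-1$ changes only bit $0$, so for $0<i<k$ the case selector $n_i$ is unchanged and the residues satisfy $(n\bmod 2^i)-((n-1)\bmod 2^i)=1$, after which the result is substitution. The paper states this more tersely; your write-up just makes the no-borrow and same-branch points explicit.
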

\begin{proof}
	For $0<i<k$, all bits of $r$ and $(r-1)$ besides the $0^\text{th}$ are the same, since $r$ is odd. Thus
	$$
	\lambda_i(n)-\lambda_i(n-1)  =
	\begin{cases}
		(n \bmod 2^i)-((n-1) \bmod 2^i)=1, & \text{ if }\ n_i=0;\\
		(2^i-(n\bmod 2^i))-(2^i-((n-1)\bmod 2^i))=-1, & \text{ if }\ n_i=1.\\
	\end{cases}
	$$
\end{proof}
The next lemma follows from Theorem \ref{exp_D_recursive} by induction.
\begin{lemma}
	\label{symmetricity}
	Let $0 \le r \le 2^k$. Then $\delta(2^{k+1}-r) = \delta(2^k+r)$.
\end{lemma}
\begin{lemma}
	\label{dnodes_diff_even}
	Let $n>0$ be even. Then
	$$
	\delta(n+1) = \delta(n) + \floor{\log_2(n)}-2\cdot s_1(n)+2.
	$$
\end{lemma}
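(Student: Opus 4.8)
The plan is to work from the closed form in Theorem~\ref{exp_D_closed}, namely $\delta(m) = \sum_{i=0}^{\floor{\log_2(m)}-1} \lambda_i(m)$, and to compute the difference $\delta(n+1)-\delta(n)$ term by term. The starting observation is that, since $n>0$ is even, $n+1$ is odd and is obtained from $n$ by flipping only the $0$th bit from $0$ to $1$; all higher bits are unchanged, so $(n+1)_i = n_i$ for $i \ge 1$. In particular $n$ and $n+1$ have the same leading bit, so $\floor{\log_2(n+1)} = \floor{\log_2(n)}$; write $K = \floor{\log_2(n)}$ (here $n \ge 2$ guarantees $n+1$ is not a power of two, so no new high bit appears). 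Hence both sums run over the same range $0 \le i \le K-1$, and I may write $\delta(n+1)-\delta(n) = \sum_{i=0}^{K-1}[\lambda_i(n+1)-\lambda_i(n)]$.

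Next I would isolate the $i=0$ term. Since $m \bmod 2^0 = 0$ for every $m$, one has $\lambda_0(m)= m_0$, so $\lambda_0(n+1)-\lambda_0(n) = 1-0 = 1$. For the remaining terms $1 \le i \le K-1$ I would apply Lemma~\ref{lambdadiff} with the odd number taken to be $n+1$: its hypotheses hold because $n+1 = 2^K + r'$ with $r' = n+1-2^K$ odd and $0 < r' < 2^K$, and the index range $0 < i < K$ of that lemma is exactly $1 \le i \le K-1$. Since $(n+1)_i = n_i$, the lemma gives $\lambda_i(n+1)-\lambda_i(n) = +1$ when $n_i = 0$ and $-1$ when $n_i = 1$.

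It then remains to count bits. Among the indices $1 \le i \le K-1$, each $0$-bit of $n$ contributes $+1$ and each $1$-bit contributes $-1$, so this block contributes $(K-1) - 2a$, where $a$ is the number of $1$-bits of $n$ in those positions. The leading bit $n_K$ equals $1$ and, because $n$ is even, $n_0 = 0$; therefore $s_1(n) = a + 1$, i.e.\ $a = s_1(n)-1$. Combining the $i=0$ term with this block yields
$$\delta(n+1)-\delta(n) = 1 + (K-1) - 2\,(s_1(n)-1) = \floor{\log_2(n)} - 2\,s_1(n) + 2,$$
as claimed.

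The per-term evaluations of $\lambda_i$ are routine; the one place to be careful is the bookkeeping at the ends of the range --- confirming that $\floor{\log_2(n+1)} = \floor{\log_2(n)}$, that the hypotheses of Lemma~\ref{lambdadiff} are genuinely met by $n+1$, and that the leading and lowest bits are each accounted for exactly once in the Hamming-weight count. The small case $K=1$ (that is, $n=2$), where the $i\ge 1$ block is empty, should be checked separately to confirm the formula still holds.
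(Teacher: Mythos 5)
Your proof is correct and follows essentially the same route as the paper's: both use the closed form of Theorem~\ref{exp_D_closed}, isolate the $i=0$ term, apply Lemma~\ref{lambdadiff} to the odd number $n+1$ on the range $1 \le i \le \floor{\log_2(n)}-1$, and finish by counting $0$- and $1$-bits. The only cosmetic difference is that you count the $1$-bits of $n$ directly, while the paper counts those of $n+1$ and converts via Lemma~\ref{weightlemma_even}; your verification of the edge cases ($n=2$, and that $n+1$ is not a power of two) is a welcome addition.
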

\begin{proof}
	Let $n=2^k+r$, with $0 \le r <  2^k$, and let $n$ 
	have binary expansion $n_k n_{k-1} \cdots n_0$. Now $n$ is even, so $\floor{\log_2(n)}=\floor{\log_2(n+1)}$.
	\begin{align*}
		\delta(n+1)-\delta(n)
		&= \sum_{i=0}^{k-1} \lambda_i(n+1)-\sum_{i=0}^{k-1} \lambda_i(n)& \text{(by Theorem \ref{exp_D_closed})}&\\
		&= \sum_{i=0}^{k-1} (\lambda_i(n+1)- \lambda_i(n))\\
		&= \lambda_0(n+1)- \lambda_0(n) + \sum_{i=1}^{k-1} (\lambda_i(n+1)- \lambda_i(n)).
	\end{align*}
	Since $n$ is even, $n_0=0$, so $\lambda_0(n+1)-\lambda_0(n)=(2^0-0)-0=1$.
	
	There are $s_1(n)-1=(s_1(n+1)-2)$ $1$s  in the $1^\text{st}$ through $(k-1)^\text{th}$ bits of $n+1$.
	Thus there are $((k-1)-(s_1(n+1)-2))$ $0$s in $1^\text{st}$ through $(k-1)^\text{th}$ bits of $(n+1)$. So 
	\begin{align*}
		\delta(n+1)-\delta(n)
		&= 1 -1\cdot (s_1(n+1)-2) + 1\cdot  ((k-1)-(s_1(n+1)-2))&\text{(by Lemma \ref{lambdadiff})}&\\
		&= k-2\cdot s_1(n+1)+4 \\
		&= k-2\cdot(s_1(n)+1)+4&\text{(by Lemma \ref{weightlemma_even})}&\\
		&= k-2\cdot s_1(n)+2\\
		&= \floor{\log_2(n)}-2\cdot s_1(n)+2.
	\end{align*}
\end{proof}
\begin{lemma}
	\label{dnodes_diff_odd}
	Let $n=2^k+r>0$ be odd. Then    $$
	\delta(n+1) = \delta(n)+\floor{\log_2(n)}-2\cdot s_1(n)+2.
	$$
\end{lemma}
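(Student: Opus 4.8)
The plan is to bootstrap from the even case, Lemma~\ref{dnodes_diff_even}, using the ``midpoint'' recurrence of Corollary~\ref{yet_another_dnodes_prop}. First I would dispose of the degenerate case $n=1$ (where $k=0$, $r=0$) by direct computation: $\delta(2)-\delta(1)=0$, matching $\floor{\log_2 1}-2s_1(1)+2=0$. For the remaining cases $k\ge 1$, oddness of $n$ forces $r$ to be odd (since $2^k$ is even), so in particular $0<r<2^k$ and the smallest non-$0$ bit of $r$ sits in position $0$, i.e.\ $\rho_1=0$, which puts us squarely in the regime where Corollary~\ref{yet_another_dnodes_prop} applies.

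Next I would apply that corollary at $n$. Rearranging $\delta(n)=\tfrac12[\delta(n-1)+\delta(n+1)]+1-\rho_1$ with $\rho_1=0$ gives $\delta(n+1)-\delta(n)=\bigl(\delta(n)-\delta(n-1)\bigr)-2$. This is the crux of the argument: it expresses the forward difference at an odd $n$ in terms of the forward difference at the even number $n-1$, which is exactly the quantity governed by the already-proved even case. Since $n-1$ is even and positive, Lemma~\ref{dnodes_diff_even} then yields $\delta(n)-\delta(n-1)=\floor{\log_2(n-1)}-2s_1(n-1)+2$, so that $\delta(n+1)-\delta(n)=\floor{\log_2(n-1)}-2s_1(n-1)$.

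Finishing then requires only two bookkeeping conversions. For the floor, $n-1=2^k+(r-1)$ with $0\le r-1\le 2^k-2$, so $2^k\le n-1<2^{k+1}$ and hence $\floor{\log_2(n-1)}=k=\floor{\log_2 n}$. For the weight, $s_1(n-1)=s_1(n)-1$ by Lemma~\ref{weightlemma_odd}, since $n$ is odd. Substituting gives $\delta(n+1)-\delta(n)=k-2\bigl(s_1(n)-1\bigr)=\floor{\log_2 n}-2s_1(n)+2$, as claimed.

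The main obstacle is not any single hard step but the index/parity bookkeeping: one must check that $\rho_1=0$ precisely because $r$ is odd, that $n-1$ genuinely lands in $[2^k,2^{k+1})$ so the floor is unchanged (this could fail only if $n$ were a power of $2$, which an odd $n>1$ never is), and that the $-2$ produced by the recurrence combines correctly with the $+2$ coming from the even case. As an independent cross-check, I would note a second route through the reflection symmetry of Lemma~\ref{symmetricity}: writing $\delta(n+1)-\delta(n)=-\bigl(\delta(2^k+s+1)-\delta(2^k+s)\bigr)$ with $s=2^k-1-r$ even reduces the odd case to the even case at the bitwise-complemented argument, where the weight transforms by $s_1(2^k+s)=k-s_1(n)+2$; both routes produce the identical formula.
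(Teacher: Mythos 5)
Your proof is correct, but it takes a genuinely different route from the paper's. The paper argues by reflection: it uses Lemma~\ref{symmetricity} to map $n=2^k+r$ and $n+1$ to the symmetric pair $m=2^{k+1}-(r+1)$ (even) and $m+1=2^{k+1}-r$, applies the even case (Lemma~\ref{dnodes_diff_even}) at $m$, and then translates the Hamming weights back across the reflection using the complementation identity of Lemma~\ref{weightlemma} together with Lemmas~\ref{weightlemma_addone} and~\ref{weightlemma_even}. You instead stay on the same side of $2^k$: the midpoint recurrence of Corollary~\ref{yet_another_dnodes_prop} with $\rho_1=0$ (valid because $r$ is odd whenever $n$ is odd and $k\ge 1$) converts the forward difference at odd $n$ into the forward difference at even $n-1$, after which the even case applies verbatim and only the trivial identity $s_1(n-1)=s_1(n)-1$ (Lemma~\ref{weightlemma_odd}) is needed. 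There is no circularity in this: Corollary~\ref{yet_another_dnodes_prop} is proved from Theorem~\ref{exp_D_recursive} alone, before any of the weight lemmas, and in fact you only need its odd-$r$ case, which is immediate from Theorem~\ref{exp_D_recursive} without induction. Your route buys a shorter weight computation, avoiding Lemma~\ref{weightlemma} and its $r<2^{k-1}$ hypothesis entirely, and it also covers $n=1$ explicitly, a case the paper's proof technically excludes by assuming $0<r<2^k$. What the paper's route buys is a showcase for the reflection symmetry of $\delta$ (hence of the Takagi dilations), a structural fact the paper wants on record anyway; your closing cross-check via $s=2^k-1-r$ is essentially the paper's proof in disguise.
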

\begin{proof}
	Let $n=2^k+r$, with $0 < r <  2^k$. We proceed by moving from $n$ and $n+1$ to the symmetric $m=2^{k+1}-(r+1)$ (which is even) and $m+1=2^{k+1}-r$ (which is odd), by Lemma \ref{symmetricity}. We then apply Lemma \ref{dnodes_diff_even}, and move back to $n$ and $n+1$, again by Lemma \ref{symmetricity}. 
	\begin{align*}
		\delta(n+1) 
		&= \delta(m) &\text{(by Lemma \ref{symmetricity})}&\\
		&= \delta(m+1)-(\floor{\log_2(m)}-2\cdot s_1(m)+2), &\text{(by Lemma \ref{dnodes_diff_even})}\\
		&= \delta(m+1)-(k-2\cdot s_1(m)+2) \\
		&= \delta(n)-(k-2\cdot s_1(m)+2) &\text{(by Lemma \ref{weightlemma})}\\ 
		&= \delta(n)-(k-2\cdot(s_1(m+1)-1)+2)&\text{(by Lemma \ref{weightlemma_even})}\\
		&= \delta(n)-(k-2\cdot s_1(2^{k+1}-r)+4)\\
		&= \delta(n)-(k-2\cdot(2+k-s_1(r))+4)&\text{(by Lemma \ref{weightlemma})}\\
		&= \delta(n)-(k-2\cdot(2+k-(s_1(n)-1))+4)&\text{(by Lemma \ref{weightlemma_addone})}\\
		&= \delta(n)+k-2\cdot s_1(n)+2\\
		&= \delta(n)+\floor{\log_2(n)}-2\cdot s_1(n)+2.
	\end{align*}
\end{proof}
We then have a general recurrence relation for $\delta(n)$, following from Lemmas \ref{dnodes_diff_even} and \ref{dnodes_diff_odd}.
\begin{theorem} 
	\label{dnodes_another_recurrence}
	The number of $D$-nodes, $\delta(n+1)$, in a divide-and-conquer tree with $n+1$ leaf nodes satisfies the equation
	$$
	\delta(n+1) 
	= \delta(n) + \floor{\log_2(n)}-2\cdot s_1(n)+2.
	$$
\end{theorem}
Repeated application of Theorem \ref{exp_D_recursive} (for even sub-sums) and Lemma \ref{dnodes_diff_even} (for odd) gives rise to Theorem \ref{another_explicit_D}, another explicit formula for the number of $D$-nodes in a divide-and-conquer tree.
\begin{theorem}\label{another_explicit_D}
	The number of $D$-nodes, $\delta(n)$, in a divide-and-conquer tree with $n+1$ leaf nodes satisfies the equation
	$$
	\delta(n) = \sum_{i=0}^{k-1}2^i \cdot n_i \cdot \bigl(k-i-2\cdot \sum_{j=i}^k n_i+4\bigr),
	$$
	or alternatively
	$$
	\delta(n) = \sum_{i=0}^{k-1}2^i \cdot n_i \cdot \bigl(k-i-2\cdot s_1\bigl(\floor{n/2^i}\bigr)+4\bigr).
	$$
\end{theorem}
\begin{proof}
	Let $n$ have binary expansion $n_k n_{k-1} \cdots n_0$. We proceed by induction, noting that the theorem is true for $n=1$ and $n=2$. 
	\newline
	If $n$ is even, the quotient $n/2$ has binary expansion $n_k n_{k-1} \cdots n_1$, so 
	\begin{align*}
		\delta(n/2)
		&=\sum_{i=1}^{k-1}2^{i-1} \cdot n_i \cdot \bigl(k-i-2\cdot \sum_{j=i}^k n_i+4\bigr)& \text{(by induction)}&\\
		&=\frac{1}{2}\sum_{i=1}^{k-1}2^i \cdot n_i \cdot \bigl(k-i-2\cdot \sum_{j=i}^k n_i+4\bigr),
	\end{align*}
	and
	\begin{align*}
		\delta(n)
		&=2\delta(n/2) &\text{(by Theorem \ref{exp_D_recursive})}&\\
		&=2\cdot \frac{1}{2}\sum_{i=1}^{k-1}2^i \cdot n_i \cdot \bigl(k-i-2\cdot \sum_{j=i}^k n_i+4\bigr) \\
		&=\sum_{i=0}^{k-1}2^i \cdot n_i \cdot \bigl(k-i-2\cdot \sum_{j=i}^k n_i+4\bigr).& \text{(since $n_0=0$)}&
	\end{align*}
	\newline
	If $n$ is odd then $n_0=1$ and $n-1$ has binary expansion $m_k m_{k-1} \cdots m_0$, where $m_i=n_i$ for $1\le i \le k$, and $m_0=0$. Then
	\begin{align*}
		\delta(n)
		&=\delta(n-1) + (\floor{\log_2(n-1)}-2\cdot s_1(n-1)+2)& \text{(by Theorem \ref{dnodes_another_recurrence})}&\\
		&=\delta(n-1) + (k-2\cdot s_1(n-1)+2) \\
		&=\sum_{i=0}^{k-1}2^i \cdot m_i \cdot \bigl(k-i-2\cdot \sum_{j=i}^k m_i+4\bigr) + (k-2\cdot s_1(n-1)+2)& \text{(by induction)}\\
		&=\sum_{i=1}^{k-1}2^i \cdot m_i \cdot \bigl(k-i-2\cdot \sum_{j=i}^k m_i+4\bigr) + (k-2\cdot s_1(n-1)+2)\\
		&=\sum_{i=1}^{k-1}2^i \cdot n_i \cdot \bigl(k-i-2\cdot \sum_{j=i}^k n_i+4\bigr) + (k-2\cdot s_1(n-1)+2)\\
		&=\sum_{i=1}^{k-1}2^i \cdot n_i \cdot \bigl(k-i-2\cdot \sum_{j=i}^k n_i+4\bigr) + (k-2\cdot(s_1(n)-1)+2)& \text{(by Lemma \ref{weightlemma_odd})}\\
		&=\sum_{i=1}^{k-1}2^i \cdot n_i \cdot \bigl(k-i-2\cdot \sum_{j=i}^k n_i+4\bigr) + \bigl(k-2\cdot \sum_{j=0}^k n_i+4\bigr)\\
		&=\sum_{i=1}^{k-1}2^i \cdot n_i \cdot \bigl(k-i-2\cdot \sum_{j=i}^k n_i+4\bigr) +  \bigl((k-0)-2\cdot \sum_{j=0}^k n_i+4\bigr) \\
		&=\sum_{i=0}^{k-1}2^i \cdot n_i \cdot \bigl(k-i-2\cdot \sum_{j=i}^k n_i+4\bigr).& \text{(since $2^0\cdot n_0=1$)}
	\end{align*}
\end{proof}
Theorem~\ref{dnodes_another_recurrence} also gives a corollary on the Hamming weight of $n$.
\begin{corollary}\label{cor_hamming_delta}
	The Hamming weight of $n$, $s_1(n)$, satisfies the equation
	$$
	s_1(n) 
	= \frac{\delta(n) - \delta(n+1) + \floor{\log_2(n)}}{2}+1.
	$$
\end{corollary}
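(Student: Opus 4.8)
The plan is to observe that this corollary is nothing more than Theorem~\ref{dnodes_another_recurrence} solved for $s_1(n)$. That theorem asserts
$$
    \delta(n+1) = \delta(n) + \left(\floor{\log_2(n)} - 2\cdot s_1(n) + 2\right),
$$
which is a single linear equation in which $s_1(n)$ appears with coefficient $-2$. Since everything else in the equation is known, the corollary follows by pure algebra, with no combinatorial input beyond what Theorem~\ref{dnodes_another_recurrence} already supplies.

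First I would subtract $\delta(n)$ from both sides to isolate the increment, obtaining $\delta(n+1) - \delta(n) = \floor{\log_2(n)} - 2\cdot s_1(n) + 2$. Next I would move the $s_1(n)$ term to the left and the difference term to the right, giving $2\cdot s_1(n) = \delta(n) - \delta(n+1) + \floor{\log_2(n)} + 2$. Finally I would divide through by $2$, which yields exactly
$$
    s_1(n) = \frac{\delta(n) - \delta(n+1) + \floor{\log_2(n)}}{2} + 1,
$$
where the constant $+2$ becomes the $+1$ after division. This is the claimed identity.

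Because the argument is a one-line rearrangement, there is essentially no obstacle to overcome: all the work has already been done in establishing Theorem~\ref{dnodes_another_recurrence} (itself assembled from Lemmas~\ref{dnodes_diff_even} and \ref{dnodes_diff_odd}). The only point worth a moment's care is bookkeeping on the constant term, making sure the $+2$ inside the parenthesis divides to $+1$ rather than being dropped or mishandled; a quick sanity check on a small case such as $n=2$ (where $s_1(2)=1$, $\delta(2)=0$, $\delta(3)=1$, and $\floor{\log_2 2}=1$, so the right side is $\tfrac{0-1+1}{2}+1 = 1$) confirms the arithmetic.
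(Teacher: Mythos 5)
Your proposal is correct and matches the paper's approach exactly: the paper presents Corollary~\ref{cor_hamming_delta} as an immediate consequence of Theorem~\ref{dnodes_another_recurrence}, obtained by the same algebraic rearrangement you carry out (and your numerical check at $n=2$ is consistent with $\delta(2)=0$, $\delta(3)=1$ from Theorem~\ref{exp_D_recursive}).
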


\section{From \texorpdfstring{$D$}--nodes to the Takagi function}\label{dnodes2takagi}
In this section, we take the identities on $D$-nodes on a divide-and-conquer tree from Section~\ref{pairwise_cnaos} and apply Theorem~\ref{takagi2} to obtain identities on the Takagi function.
\begin{theorem} 
	\label{th_yet_another_takagi_prop}
	Let $1 \le r\le 2^k-1$. Let $\rho_1$ be the position of the smallest non-$0$ bit in $r$. Then
	\begin{equation}\label{eq_yet_another_takagi_prop}
		\tau\left(\frac{r}{2^k}\right)=
		\frac{1}{2} \left(\tau\left(\frac{r-1}{2^k}\right) + \tau\left(\frac{r+1}{2^k}\right)\right) + \frac{1-\rho_1}{2^k}.
	\end{equation}
\end{theorem}
\begin{proof}
	Follows from Theorem~\ref{takagi2} and Corollary~\ref{yet_another_dnodes_prop}. 
\end{proof}
We observe that when $x=\dfrac{r-1}{2^k} \text{ and } y=\dfrac{r+1}{2^k},$  Theorem~\ref{th_yet_another_takagi_prop} gives rise to a special case of Boros' inequality \cite{boros08}:
$$
\tau\bigl(\frac{x+y}{2}\bigr) \le \frac{1}{2}(\tau(x)+\tau(y))+\frac{\bigl|x-y\bigr|}{2}.
$$
For this $x$ and $y$, Equation (\ref{eq_yet_another_takagi_prop}) in the theorem may be restated as follows:
$$
\tau\bigl(\frac{x+y}{2}\bigr)= \frac{1}{2}(\tau(x)+\tau(y))+\frac{\left|x-y\right|}{2} - \frac{\rho_1}{2^k},
$$ 
so here, equality holds in Boros' inequality if and only if $\rho_1=0$, i.e., $r$ is odd. 
\begin{theorem}
	\label{cor_takagi_exp_D_closed}
	Let $r$ be an integer. The Takagi function on $\frac{r}{2^k}$ satisfies 
	the equation
	$$
	\tau\left(\frac{r}{2^k}\right)=
	\frac{1}{2^k} \cdot \sum_{i=0}^{k-1} \lambda_i(n) \text{, where } \lambda_i(n)= 
	\begin{cases}
		(n\bmod 2^i), & \text{ if } n_i=0;\\
		2^i-(n\bmod 2^i), & \text{ if } n_i=1.
	\end{cases}	  
	$$ 
	An explicit form is
	$$
	\tau\left(\frac{r}{2^k}\right)=
	\frac{1}{2^k} \cdot \sum_{i=0}^{k-1} (n_i\cdot 2^i + (-1)^{n_i}\cdot (n \bmod 2^i)).
	$$
\end{theorem}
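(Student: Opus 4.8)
The plan is to derive both displayed formulas by a single substitution: feed the closed form for $\delta(n)$ from Theorem~\ref{exp_D_closed} into the dilation identity of Theorem~\ref{takagi2}. Writing $n = 2^k + r$ as in Notation~\ref{not_n}, Theorem~\ref{takagi2} gives $\tau\!\left(\frac{r}{2^k}\right) = \frac{\delta(2^k+r)}{2^k}$, so the entire task reduces to rewriting $\delta(2^k+r)$ with its closed expression and then dividing through by $2^k$. Both the implicit form and the explicit form of the target are obtained in exactly the same way from the two displayed formulas of Theorem~\ref{exp_D_closed}.

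The one point that genuinely needs checking is that the upper summation limit matches. Theorem~\ref{exp_D_closed} sums up to $\floor{\log_2(n)}-1$, whereas the target sums to $k-1$. First I would verify these coincide: for $n = 2^k+r$ with $0 \le r < 2^k$ we have $2^k \le n < 2^{k+1}$, so $\floor{\log_2(n)} = k$, and the two limits agree. I would also note that only indices $i < k$ appear in the sum; for such $i$ the low bits of $n = 2^k+r$ coincide with those of $r$, so $n_i = r_i$ and $n \bmod 2^i = r \bmod 2^i$, and the piecewise definition of $\lambda_i(n)$ is unaffected by the substitution. Dividing $\delta(2^k+r) = \sum_{i=0}^{k-1}\lambda_i(n)$ by $2^k$ then yields the first formula.

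For the second (explicit) form I would simply repeat the argument with the explicit summand $n_i\cdot 2^i + (-1)^{n_i}(n \bmod 2^i)$ from Theorem~\ref{exp_D_closed}, dividing the resulting sum by $2^k$. I do not expect a real obstacle here, since every step is either an appeal to an earlier result or an elementary arithmetic simplification; the only subtlety to flag is the boundary behavior of $\floor{\log_2(n)}$, which forces the standing assumption $0 \le r < 2^k$ (inherited from Theorem~\ref{takagi2}) and guarantees $\floor{\log_2(n)} = k$ throughout.
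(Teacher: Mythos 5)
Your proposal is correct and matches the paper's own proof, which simply cites Theorem~\ref{exp_D_closed} together with the dilation identity of Theorem~\ref{takagi2}. Your additional check that $\floor{\log_2(n)} = k$ for $2^k \le n < 2^{k+1}$ (so the summation limits agree) is a worthwhile detail the paper leaves implicit, but the route is the same.
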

\begin{proof}
	Follows from Theorem~\ref{exp_D_closed}.
\end{proof}
\begin{theorem} 
	\label{takagi_another_recurrence} 
	Let $r$ be an integer. The Takagi function on $\frac{r+1}{2^k}$ 
	satisfies the equation
	$$
	\tau\left(\frac{r+1}{2^k}\right) 
	= \tau\left(\frac{r}{2^k}\right) + \frac{1}{2^k}\cdot (k-2\cdot s_1(n)+2).
	$$
\end{theorem}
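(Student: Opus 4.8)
The plan is to obtain this identity as the Takagi-function image of the $D$-node recurrence in Theorem~\ref{dnodes_another_recurrence}, using the dilation identity of Theorem~\ref{takagi2} to pass between $\delta$ and $\tau$. Throughout I write $n = 2^k + r$, so that $n$ and $n+1 = 2^k + (r+1)$ are the two consecutive integers whose $D$-node counts correspond to the two Takagi values appearing in the statement.

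First I would invoke Theorem~\ref{takagi2} twice. With $n = 2^k + r$ it gives $\tau(r/2^k) = \delta(n)/2^k$, and with $n+1 = 2^k + (r+1)$ it gives $\tau((r+1)/2^k) = \delta(n+1)/2^k$ (valid so long as $r+1 \le 2^k$, keeping the argument inside $[0,1]$). Subtracting these two equalities yields
$$
    \tau\left(\frac{r+1}{2^k}\right) - \tau\left(\frac{r}{2^k}\right) = \frac{\delta(n+1) - \delta(n)}{2^k}.
$$
Next I would apply Theorem~\ref{dnodes_another_recurrence} to the numerator, which gives $\delta(n+1) - \delta(n) = \floor{\log_2(n)} - 2\cdot s_1(n) + 2$. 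The one small point to check is that $\floor{\log_2(n)} = k$: this holds because $0 \le r < 2^k$ forces $2^k \le n < 2^{k+1}$. Substituting $k$ for $\floor{\log_2(n)}$ and dividing by $2^k$ then produces the claimed identity directly.

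Because every nontrivial step is already packaged in an earlier result, I do not expect a genuine obstacle here; the argument is essentially a change of variables through Theorem~\ref{takagi2}, exactly paralleling the one-line proofs of Theorems~\ref{th_yet_another_takagi_prop} and \ref{cor_takagi_exp_D_closed}. The only thing warranting care is bookkeeping at the endpoints — confirming that $\floor{\log_2(n)}$ really equals the $k$ in the stated formula, and that the range of $r$ keeps $(r+1)/2^k$ within the domain $[0,1]$ of $\tau$ — both of which are immediate from $n = 2^k + r$ with $0 \le r < 2^k$.
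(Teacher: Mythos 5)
Your proposal is correct and is exactly the paper's argument: the paper's proof is the one-line ``Follows from Theorem~\ref{dnodes_another_recurrence},'' which implicitly means precisely the conversion you spell out via Theorem~\ref{takagi2}. Your added bookkeeping (checking $\floor{\log_2(n)}=k$ and that $r+1 \le 2^k$ keeps the argument in the domain of $\tau$) just makes explicit what the paper leaves tacit.
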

\begin{proof}
	Follows from Theorem~\ref{dnodes_another_recurrence}.
\end{proof}
Theorem~\ref{takagi_another_recurrence} 
may also be derived from Kr\"uppel's Proposition 2.1
\cite[Formula (2.2)]{kruppel07}
by substituting $1$ for $x$, $r$ for $k$, and $k$ for $\ell$ in the statement of the formula.
\begin{theorem} \label{th_another_explicit_takagi}
	Let $r$ be an integer. The Takagi function on $\frac{r}{2^k}$ 
	satisfies the equation
	$$
	\tau\left(\frac{r}{2^k}\right)=
	\frac{1}{2^k} \cdot \sum_{i=0}^{k-1}2^i \cdot n_i \cdot \bigl(k-i-2\cdot \sum_{j=i}^k n_i+4\bigr),
	$$
	or alternatively
	$$
	\tau\left(\frac{r}{2^k}\right)=
	\frac{1}{2^k} \cdot \sum_{i=0}^{k-1}2^i \cdot n_i \cdot \bigl(k-i-2\cdot s_1\left(\floor{\frac{n}{2^i}}\right)+4\bigr).
	$$
\end{theorem}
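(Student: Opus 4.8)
The plan is to obtain both displayed identities as immediate consequences of the explicit $D$-node count in Theorem~\ref{another_explicit_D} together with the dilation identity of Theorem~\ref{takagi2}. Concretely, Theorem~\ref{takagi2} asserts that $\tau\!\left(r/2^k\right) = \delta(2^k+r)/2^k$, so once an explicit formula for $\delta(2^k+r)$ is in hand, dividing it by $2^k$ produces an explicit formula for $\tau\!\left(r/2^k\right)$. Since Theorem~\ref{another_explicit_D} already supplies exactly such a formula for $\delta(n)$, the entire argument reduces to a substitution and a single division, matching the pattern of the preceding results in this section.

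First I would set $n = 2^k + r$ with $0 \le r < 2^k$ as in Notation~\ref{not_n}. The key bookkeeping step is to confirm that the upper summation index in Theorem~\ref{another_explicit_D}, namely $\floor{\log_2(n)}$, coincides with the exponent $k$ appearing in $\tau\!\left(r/2^k\right)$: since $2^k \le n < 2^{k+1}$, we have $\floor{\log_2(n)} = k$, the top bit of $n$ sits at position $k$, and the digits $n_0, \dots, n_{k-1}$ indexing the sum are exactly those associated with $r$. With the indices matched, I would substitute the formula of Theorem~\ref{another_explicit_D} into $\tau\!\left(r/2^k\right) = \delta(2^k+r)/2^k$ and factor $1/2^k$ outside the sum, which yields the first displayed identity verbatim.

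The second form then follows without further work, because Theorem~\ref{another_explicit_D} already records the equivalence $\sum_{j=i}^k n_j = s_1\!\left(\floor{n/2^i}\right)$: the integer $\floor{n/2^i}$ has binary expansion $n_k n_{k-1} \dots n_i$, so its Hamming weight is precisely the partial digit sum $\sum_{j=i}^k n_j$. Replacing the inner sum by $s_1\!\left(\floor{n/2^i}\right)$ in the expression just obtained gives the alternative statement.

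I do not expect a genuine obstacle here, since all the combinatorial content has already been discharged in Theorem~\ref{another_explicit_D}. The only point demanding care is the consistency of notation across the three results being combined: that the $k$ of $n = 2^k + r$, the $\floor{\log_2(n)}$ of the summation bound, and the scaling exponent in $\tau\!\left(r/2^k\right)$ are all the same integer on the domain $0 \le r < 2^k$, together with a quick check of the degenerate case $r=0$ (where both sides vanish). Once that identification is made explicit, the theorem is a one-line corollary of Theorems~\ref{takagi2} and~\ref{another_explicit_D}.
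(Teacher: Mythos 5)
Your proposal is correct and matches the paper's proof exactly: the paper also derives this theorem as an immediate consequence of Theorem~\ref{another_explicit_D} via the dilation identity $\tau\left(\frac{r}{2^k}\right)=\frac{\delta(2^k+r)}{2^k}$ of Theorem~\ref{takagi2}. Your additional bookkeeping (verifying $\floor{\log_2(n)}=k$ and the identification $\sum_{j=i}^k n_j = s_1\left(\floor{\frac{n}{2^i}}\right)$) is sound and simply makes explicit what the paper leaves implicit.
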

\begin{proof}
	Follows from Theorem~\ref{another_explicit_D}.
\end{proof}
We end the paper with Theorem~\ref{th_hamming_takagi} on the Hamming weight of binary integers and its Corollary~\ref{cor_trollope} on the cumulative sum of the weights of integers. Corollary~\ref{cor_trollope} was shown by Trollope in 1968 \cite{trollope68}, but also follows directly from the weight result, so we provide that proof.
\begin{theorem}\label{th_hamming_takagi}
	The Hamming weight of $n$, $s_1(n)$, satisfies the equation
	$$
	s_1(n) 
	= 2^{k-1} \left(\tau\left(\frac{r}{2^k}\right) - \tau\left(\frac{r+1}{2^k}\right)\right) + \frac{k+2}{2}.
	$$
\end{theorem}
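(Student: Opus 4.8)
The plan is to read off the statement directly from Corollary~\ref{cor_hamming_delta} once the two $\delta$-values appearing there are translated into Takagi values via Theorem~\ref{takagi2}. Writing $n=2^k+r$ with $0\le r<2^k$, Corollary~\ref{cor_hamming_delta} gives
$$
s_1(n)=\frac{\delta(n)-\delta(n+1)+\floor{\log_2(n)}}{2}+1,
$$
so the whole task reduces to expressing $\delta(n)$ and $\delta(n+1)$ through $\tau$.

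First I would apply Theorem~\ref{takagi2}, which states $\tau\!\left(\frac{r}{2^k}\right)=\frac{\delta(2^k+r)}{2^k}$ for $0\le r\le 2^k$. Since $n=2^k+r$, this immediately yields $\delta(n)=2^k\,\tau\!\left(\frac{r}{2^k}\right)$. For the second term I would write $n+1=2^k+(r+1)$ and apply the same theorem to the argument $r+1$; the point to check is that $r+1$ still lies in the admissible range $[0,2^k]$, which holds because $r\le 2^k-1$. This gives $\delta(n+1)=2^k\,\tau\!\left(\frac{r+1}{2^k}\right)$. The only other ingredient is the floor value: because $2^k\le n<2^{k+1}$, we have $\floor{\log_2(n)}=k$.

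Substituting these three facts into Corollary~\ref{cor_hamming_delta} produces
$$
s_1(n)=\frac{2^k\,\tau\!\left(\frac{r}{2^k}\right)-2^k\,\tau\!\left(\frac{r+1}{2^k}\right)+k}{2}+1
=2^{k-1}\left[\tau\!\left(\frac{r}{2^k}\right)-\tau\!\left(\frac{r+1}{2^k}\right)\right]+\frac{k}{2}+1,
$$
and collapsing $\frac{k}{2}+1=\frac{k+2}{2}$ gives exactly the claimed identity. There is no genuinely hard step here: the result is a formal consequence of two earlier results, and the only things that require care are verifying that Theorem~\ref{takagi2} is applicable to both $r$ and $r+1$ (i.e.\ the range constraint $r\le 2^k-1$) and recording $\floor{\log_2(n)}=k$. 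The closest thing to an obstacle is the boundary case $r=2^k-1$, where $r+1=2^k$ sits at the right endpoint of the interval, but this is precisely covered by the inclusive bound $0\le r\le 2^k$ in Theorem~\ref{takagi2}.
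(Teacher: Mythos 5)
Your proof is correct and takes essentially the same route as the paper: the paper's proof solves Theorem~\ref{takagi_another_recurrence} for $s_1(n)$, which is exactly your computation with the dilation Theorem~\ref{takagi2} applied \emph{before} rearranging for $s_1(n)$ rather than after. Citing Corollary~\ref{cor_hamming_delta} instead of Theorem~\ref{takagi_another_recurrence} merely changes the order in which the same two ingredients (the recurrence of Theorem~\ref{dnodes_another_recurrence} and the dilation identity) are combined.
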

\begin{proof}
	Follows from Theorem \ref{takagi_another_recurrence}.
\end{proof}
\begin{lemma}\label{lemma_sumpow2}
	Let $S_1(n)=\sum_{i=0}^{n-1} s_1(i)$. If $n$ is a power of 2, then $S_1(n)=\frac{1}{2} \cdot n\cdot \log_2(n)$.
\end{lemma}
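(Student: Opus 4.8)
The plan is to prove this by induction on $k$, where $n = 2^k$, after first reducing the claim to the clean doubling recurrence $S_1(2^{k+1}) = 2\,S_1(2^k) + 2^k$, with base case $S_1(2^0) = S_1(1) = s_1(0) = 0$.

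First I would establish this recurrence by splitting the defining sum at the midpoint $2^k$:
$$S_1(2^{k+1}) = \sum_{i=0}^{2^k - 1} s_1(i) + \sum_{i=2^k}^{2^{k+1}-1} s_1(i) = S_1(2^k) + \sum_{j=0}^{2^k-1} s_1(2^k + j).$$
Each term in the second sum simplifies by additivity of the Hamming weight across the leading bit: for $0 \le j < 2^k$ the integer $2^k + j$ has its top bit set and its remaining bits equal to those of $j$, so $s_1(2^k + j) = s_1(j) + 1$. This is exactly Lemma~\ref{weightlemma_addone} applied with remainder $j$. Substituting, the second sum becomes $\sum_{j=0}^{2^k-1}(s_1(j)+1) = S_1(2^k) + 2^k$, which yields the claimed recurrence.

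Then I would run the induction. Assuming $S_1(2^k) = k\,2^{k-1}$, the recurrence gives
$$S_1(2^{k+1}) = 2 \cdot k\,2^{k-1} + 2^k = k\,2^k + 2^k = (k+1)\,2^k,$$
which is precisely $\tfrac{1}{2}\cdot 2^{k+1}\cdot(k+1) = \tfrac{1}{2}\,n\log_2 n$ for $n = 2^{k+1}$, closing the induction.

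There is essentially no hard step: the only point requiring care is the digit-additivity $s_1(2^k+j) = s_1(j)+1$, which is immediate and already recorded as Lemma~\ref{weightlemma_addone}. As an alternative, more transparent argument, I could count the total number of $1$-bits among all $k$-bit strings directly: among the $2^k$ integers in $[0,2^k)$ each of the $k$ bit positions is set in exactly $2^{k-1}$ of them, so the aggregate digit sum is $k\cdot 2^{k-1}$ with no induction at all. A third route, tying the lemma into the paper's machinery, is to sum Corollary~\ref{cor_hamming_delta} over $0 \le i < 2^k$: the $\delta$-terms telescope using $\delta(1) = \delta(2^k) = 0$, and the residual $\tfrac{1}{2}\sum_{i=1}^{2^k-1}\floor{\log_2 i}$ evaluates via $\sum_{j=0}^{k-1} j\,2^j = (k-2)2^k + 2$ to give the same total $k\,2^{k-1}$.
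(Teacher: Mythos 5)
Your proposal is correct, but your primary route is not the paper's; in fact the paper's entire proof is the argument you relegate to an ``alternative'': it observes in one line that exactly half of the bits in the integers $0,1,\dots,n-1$ have value $1$, i.e.\ each of the $k$ bit positions is set in exactly $2^{k-1}$ of the $2^k$ integers in $[0,2^k)$, giving $S_1(n)=k\cdot 2^{k-1}=\frac{1}{2}\,n\log_2 n$ outright. Your main argument --- the doubling recurrence $S_1(2^{k+1})=2\,S_1(2^k)+2^k$, obtained by splitting the sum at $2^k$ and applying Lemma~\ref{weightlemma_addone} termwise, then induction on $k$ --- is sound at every step (base case $S_1(1)=0$, the recurrence, and the algebra $2\cdot k\,2^{k-1}+2^k=(k+1)2^k$), just longer than needed. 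The trade-off: your split at $2^k$ is exactly the decomposition the paper itself uses later in the proof of Corollary~\ref{cor_trollope}, so your route meshes naturally with that machinery and is the version that generalizes toward cumulative digit sums at arbitrary $n$; the paper's bit-position count, by contrast, is immediate but specific to powers of $2$. Your third route --- summing Corollary~\ref{cor_hamming_delta}, telescoping the $\delta$-terms via $\delta(1)=\delta(2^k)=0$, and evaluating $\sum_{j=0}^{k-1} j\,2^j=(k-2)2^k+2$ --- also checks out (it yields $\frac{1}{2}\left[(k-2)2^k+2\right]+2^k-1=k\,2^{k-1}$) and is a pleasant consistency check against the $D$-node identities, though it is the least elementary of the three and requires the small caveat, which you implicitly handle, that the corollary cannot be applied at $i=0$.
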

\begin{proof}
	Exactly half of the bits in the integers between $1$ and $n-1$ have value $1$.
\end{proof}
\begin{corollary} \cite{trollope68}\label{cor_trollope}
	Let $S_1(n)=\sum_{i=0}^{n-1} s_1(i)$. 
	Then
	$$S_1(n) = \frac{n \cdot \log_2(n)}{2} + 2^{k-1}\cdot (2x - \tau(x) - (1+x)\cdot \log_2(1+x)).$$
\end{corollary}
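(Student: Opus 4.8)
The plan is to split the cumulative sum $S_1(n)=\sum_{i=0}^{n-1}s_1(i)$ at the power of two $2^k$, handle the initial block with Lemma~\ref{lemma_sumpow2}, and handle the remaining tail $\sum_{i=2^k}^{n-1}s_1(i)$ by applying the weight formula of Theorem~\ref{th_hamming_takagi} termwise and telescoping the Takagi values. The point of this split is that Lemma~\ref{lemma_sumpow2} gives a clean closed form on the power-of-two prefix, and Theorem~\ref{th_hamming_takagi} expresses each remaining weight as a difference of consecutive Takagi values plus a constant, which is precisely the shape needed for telescoping.

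Concretely, I would write $S_1(n)=S_1(2^k)+\sum_{j=0}^{r-1}s_1(2^k+j)$, where the reindexing uses $n-1=2^k+(r-1)$. The first summand is $\tfrac{1}{2}\cdot 2^k\cdot k=2^{k-1}k$ by Lemma~\ref{lemma_sumpow2}. For the tail, I note that every integer $2^k+j$ with $0\le j\le 2^k-1$ satisfies $\lfloor\log_2(2^k+j)\rfloor=k$, so Theorem~\ref{th_hamming_takagi} applies with the same $k$ throughout and gives $s_1(2^k+j)=2^{k-1}\bigl[\tau(j/2^k)-\tau((j+1)/2^k)\bigr]+\tfrac{k+2}{2}$. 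Summing over $j$ from $0$ to $r-1$, the bracketed differences telescope to $\tau(0)-\tau(r/2^k)=-\tau(x)$ (using $\tau(0)=0$), while the constant contributes $r\cdot\tfrac{k+2}{2}$. Combining the two pieces yields
\[
S_1(n)=2^{k-1}k-2^{k-1}\tau(x)+\frac{r(k+2)}{2}.
\]

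It then remains to rewrite this in the claimed $(x,n)$-form. Using Lemma~\ref{lem_conversion} to substitute $r=2^k x$ and factoring out $2^{k-1}$ gives $S_1(n)=2^{k-1}\bigl[k(1+x)+2x-\tau(x)\bigr]$; separately expanding the target right-hand side with $n=2^k(1+x)$ and $\log_2 n=k+\log_2(1+x)$ shows the two $(1+x)\log_2(1+x)$ terms cancel, leaving exactly $2^{k-1}\bigl[k(1+x)+2x-\tau(x)\bigr]$, so the two expressions match. I expect the whole argument to be essentially mechanical once the telescoping is set up; the only steps requiring care are the reindexing, the verification that the floor-of-logarithm (and hence the additive constant $\tfrac{k+2}{2}$) remains equal to $k$ across the entire tail block, and the degenerate case $r=0$, in which the tail is empty and both sides reduce to $2^{k-1}k$.
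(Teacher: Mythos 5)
Your proposal is correct and follows essentially the same route as the paper's proof: split $S_1(n)$ at $2^k$, evaluate the prefix with Lemma~\ref{lemma_sumpow2}, telescope the tail via Theorem~\ref{th_hamming_takagi}, and finish with the substitutions of Lemma~\ref{lem_conversion}. The only (immaterial) difference is that the paper transforms its intermediate expression forward into the stated form by adding and subtracting the $(1+x)\log_2(1+x)$ term, whereas you expand both your intermediate form and the target and check that they agree.
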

\begin{proof}
	\begin{align}
		S_1(n) &= \sum_{i=0}^{2^k-1} s_1(i) + \sum_{i=2^k}^{2^k+r-1} s_1(i) \nonumber\\
		&= \frac{2^k\cdot k}{2} + \sum_{i=2^k}^{2^k+r-1} s_1(i) &\text{(by Lemma~\ref{lemma_sumpow2})} \nonumber\\
		&= \frac{2^k\cdot k}{2} + 2^{k-1}\cdot \left(\tau\left(\frac{0}{2^k}\right)-\tau\left(\frac{r}{2^k}\right)\right)+ \sum_{i=2^k}^{2^k+r-1} \frac{k+2}{2} &\text{(by Theorem~\ref{th_hamming_takagi})} \nonumber\\
		&= \frac{2^k\cdot k}{2} - 2^{k-1}\cdot \tau\left(\frac{r}{2^k}\right)+ r \cdot \frac{k+2}{2}  \nonumber\\
		&= \frac{(2^k+r)\cdot k}{2} - 2^{k-1}\cdot \tau\left(\frac{r}{2^k}\right)+ r   \nonumber\\
		&= \frac{(2^k+r)\cdot k}{2} + 2^{k-1}\cdot \left(2\cdot \frac{r}{2^k} - \tau\left(\frac{r}{2^k}\right) \right)   \nonumber\\
		&= \frac{n\cdot k}{2} + 2^{k-1}\cdot(2x - \tau(x) )  &\text{(by Lemma \ref{lem_conversion})}\label{eq_sleek}\\
		&= \frac{n\cdot k}{2} + \frac{n}{2}\cdot \log_2\left(\frac{n}{2^k}\right) + 2^{k-1}\cdot \left(2x - \tau(x) - \frac{n}{2^k}\cdot \log_2\left(\frac{n}{2^k}\right) \right)  \nonumber\\[1ex]
		&= \frac{n\cdot k}{2} + \frac{n}{2}\cdot (\log_2n-k) + 2^{k-1}\cdot \left(2x - \tau(x) - \frac{n}{2^k}\cdot \log_2\left(\frac{n}{2^k}\right) \right)   \nonumber\\[1ex]
		&= \frac{n\cdot \log_2n}{2} + 2^{k-1}\cdot \left(2x - \tau(x) - \frac{n}{2^k}\cdot \log_2\left(\frac{n}{2^k}\right) \right)  \nonumber\\[1ex]
		&= \frac{n\cdot \log_2n}{2} +  2^{k-1}\cdot (2x - \tau(x) - (1+x)\cdot \log_2(1+x) ). &\text{(by Lemma \ref{lem_conversion})} \nonumber
	\end{align}
\end{proof}
Finally, Corollary~\ref{cor_trollope3way} gives three succinct formulas of the cumulative binary digit sum in terms of the Takagi function and in terms of the number of $D$-nodes.
\begin{corollary}\label{cor_trollope3way}
	Let $n$ be as in Notation~\ref{not_n}, and let $S_1(n)=\sum_{i=0}^{n-1} s_1(i)$. Then 
	\begin{center}
		\begin{minipage}{.5\textwidth}
			\begin{enumerate}
				\item $S_1(n) = \frac{1}{2}\cdot (nk + 2^k\cdot  (2x - \tau(x) ))$,\label{cor_trollope3way_1}
				\item $S_1(n)=\frac{1}{2}\cdot \left(nk + 2r  -2^k\cdot \tau\left(\frac{r}{2^k}\right)\right) $, and\label{cor_trollope3way_2}
				\item  $S_1(n) = \frac{1}{2}\cdot (nk +2 r -  \delta(n)) $.\label{cor_trollope3way_3}
			\end{enumerate}
		\end{minipage}
	\end{center}
\end{corollary}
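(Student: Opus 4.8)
The plan is to prove the three formulas in sequence, since each one follows from the previous by a single substitution, and the first is already essentially computed inside the proof of Corollary~\ref{cor_trollope}. No new combinatorial work is needed; the corollary simply repackages the one substantive computation into three progressively more explicit forms.

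First I would establish part \ref{cor_trollope3way_1} by reading it off from equation (\ref{eq_sleek}) in the proof of Corollary~\ref{cor_trollope}, which states that
$$
S_1(n) = \frac{n\cdot k}{2} + 2^{k-1}\cdot \left[2x - \tau(x)\right].
$$
Factoring out $\tfrac{1}{2}$ and using $2\cdot 2^{k-1} = 2^k$ gives $S_1(n) = \tfrac{1}{2}\left[nk + 2^k\left(2x-\tau(x)\right)\right]$, which is exactly part \ref{cor_trollope3way_1}.

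Next, part \ref{cor_trollope3way_2} follows from part \ref{cor_trollope3way_1} by invoking Lemma~\ref{lem_conversion}. Since $x = \tfrac{r}{2^k}$, we have $2^k\cdot 2x = 2r$ and $\tau(x) = \tau\!\left(\tfrac{r}{2^k}\right)$, so substituting these into the bracket of \ref{cor_trollope3way_1} converts the dilation-normalized expression into one written directly in terms of $r$ and the unscaled Takagi value. Finally, part \ref{cor_trollope3way_3} follows from part \ref{cor_trollope3way_2} by applying Theorem~\ref{takagi2}, which gives $2^k\cdot\tau\!\left(\tfrac{r}{2^k}\right) = \delta(2^k+r) = \delta(n)$; replacing the Takagi term in \ref{cor_trollope3way_2} by $\delta(n)$ yields the purely combinatorial form in terms of $D$-nodes.

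I do not expect a genuine obstacle here: the three statements are equivalent restatements linked by the variable conversions of Lemma~\ref{lem_conversion} and the dilation identity of Theorem~\ref{takagi2}. The only point requiring care is the bookkeeping of the factor $2^{k-1}$ versus $2^k$ when clearing the leading $\tfrac{1}{2}$, together with the consistent use of the identifications $x = \tfrac{r}{2^k}$ and $1+x = \tfrac{n}{2^k}$.
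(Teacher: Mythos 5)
Your proposal is correct and follows essentially the same route as the paper: part \ref{cor_trollope3way_1} is read off from Equation (\ref{eq_sleek}), part \ref{cor_trollope3way_2} follows by the substitution $x=\frac{r}{2^k}$ from Lemma~\ref{lem_conversion}, and part \ref{cor_trollope3way_3} follows from the dilation identity of Theorem~\ref{takagi2}. The paper's proof is just a one-line citation of these same ingredients, so your write-up is simply a fleshed-out version of it.
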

\begin{proof}
	Follows from Equation (\ref{eq_sleek}) in the proof of Corollary~\ref{cor_trollope}, and Theorem~\ref{takagi2}.
\end{proof}
The first two subcorollaries of Corollary~\ref{cor_trollope3way} may also be derived from Kr\"uppel's Proposition 2.1 (Formula (2.4) \cite{kruppel07}, by substituting $r$ for $n$ and $\ell$ for $k$ in the statement of the formula and using Lemma~\ref{lem_conversion} to obtain $S_1(n)$. The third subcorollary  would then follow by Theorem~\ref{takagi2}.

\section{Acknowledgments}
We thank Jean-Paul Allouche for calling our attention to Kr\"uppel's results on Hamming weights. We also thank the anonymous reviewers for careful reading and helpful remarks.

This paper has been assigned the LANL identification number LA-UR-24-21017.
The U.S. Government retains an irrevocable, nonexclusive, royalty-free license to publish, translate, reproduce, use, or dispose of the published form of the work and to authorize others to do the same.

	\bibliographystyle{jis}
	\bibliography{biblio}

	\bigskip
	\hrule
	\bigskip
	
	\noindent 2020 {\it Mathematics Subject Classification}:
	Primary 68R05; Secondary 26A27, 05C05, 28A80.
	
	\noindent \emph{Keywords: } Takagi function, full binary tree, divide-and-conquer tree, Hamming weight.
	
	\bigskip
	\hrule
	\bigskip
	
	\noindent (Concerned with OEIS sequences
\seqnum{A000001},
\seqnum{A268289}, and
\seqnum{A296062}.)
	
	\bigskip
	\hrule
	\bigskip

\end{document}